\documentclass[12pt,leqno]{article}
\usepackage{amsfonts}
\pagestyle{plain}
\usepackage{amsmath, amsthm, amsfonts, amssymb, color}
\usepackage{mathrsfs}
\setlength{\topmargin}{0cm} \setlength{\oddsidemargin}{0cm}
\setlength{\evensidemargin}{0cm} \setlength{\textwidth}{16truecm}
\setlength{\textheight}{22truecm}

\renewcommand{\bar}{\overline}
\renewcommand{\hat}{\widehat}
\renewcommand{\tilde}{\widetilde}

\newtheorem{thm}{Theorem}[section]

\newtheorem{lem}[thm]{Lemma}
\newtheorem{prp}[thm]{Proposition}

\newtheorem{exa}[thm]{Example}
\theoremstyle{definition}
\newtheorem{defn}{Definition}[section]
\newcommand{\scr}[1]{\mathscr #1}
\definecolor{wco}{rgb}{0.5,0.2,0.3}

\numberwithin{equation}{section} \theoremstyle{remark}

\newcommand{\ua}{\uparrow}

\title{{\bf Ergodicity for Neutral Type  SDEs   with Infinite Length of  Memory}
\thanks{This work is
supported   in part by NNSFC (11771326,  11431014, 11726627).} }
\author{
{\bf  Jianhai Bao$^{b),c)}$,  Feng-Yu Wang$^{a),c)}$, Chenggui Yuan$^{c)}$}\\
\footnotesize{$^{a)}$Center for Applied Mathematics, Tianjin
University, Tianjin 300072, China}\\
\footnotesize{$^{b)}$School of Mathematics and Statistics, Central
South
University, Changsha 410083, China}\\
\footnotesize{$^{c)}$Department of Mathematics, Swansea University,
Singleton Park, SA2 8PP, UK}\\ \footnotesize{jianhaibao@csu.edu.cn,
wangfy@bnu.edu.cn, C.Yuan@swansea.ac.uk}}

\begin{document}
\def\R{\mathbb R}  \def\ff{\frac} \def\ss{\sqrt} \def\B{\mathbf
B}
\def\N{\mathbb N} \def\kk{\kappa} \def\m{{\bf m}}
\def\dd{\delta} \def\DD{\Delta} \def\vv{\varepsilon} \def\rr{\rho}
\def\<{\langle} \def\>{\rangle} \def\GG{\Gamma} \def\gg{\gamma}
  \def\nn{\nabla} \def\pp{\partial} \def\EE{\scr E}
\def\d{\text{\rm{d}}} \def\bb{\beta} \def\aa{\alpha} \def\D{\scr D}
  \def\si{\sigma} \def\ess{\text{\rm{ess}}}
\def\beg{\begin} \def\beq{\begin{equation}}  \def\F{\scr F}
\def\Ric{\text{\rm{Ric}}} \def\Hess{\text{\rm{Hess}}}
\def\e{\text{\rm{e}}} \def\ua{\underline a} \def\OO{\Omega}  \def\oo{\omega}
 \def\tt{\tilde} \def\Ric{\text{\rm{Ric}}}
\def\cut{\text{\rm{cut}}} \def\P{\mathbb P} \def\ifn{I_n(f^{\bigotimes n})}
\def\C{\scr C}      \def\aaa{\mathbf{r}}     \def\r{r}
\def\gap{\text{\rm{gap}}} \def\prr{\pi_{{\bf m},\varrho}}  \def\r{\mathbf r}
\def\Z{\mathbb Z} \def\vrr{\varrho} \def\ll{\lambda}
\def\L{\scr L}\def\Tt{\tt} \def\TT{\tt}\def\II{\mathbb I}
\def\i{{\rm in}}\def\Sect{{\rm Sect}}\def\E{\mathbb E} \def\H{\mathbb H}
\def\M{\scr M}\def\Q{\mathbb Q} \def\texto{\text{o}} \def\LL{\Lambda}
\def\Rank{{\rm Rank}} \def\B{\scr B} \def\i{{\rm i}} \def\HR{\hat{\R}^d}
\def\to{\rightarrow}\def\l{\ell}
\def\8{\infty}\def\X{\mathbb{X}}\def\3{\triangle}
\def\V{\mathbb{V}}\def\M{\mathbb{M}}\def\W{\mathbb{W}}\def\Y{\mathbb{Y}}\def\1{\lesssim}

\def\La{\Lambda}\def\S{\mathbf{S}}

\renewcommand{\bar}{\overline}
\renewcommand{\hat}{\widehat}
\renewcommand{\tilde}{\widetilde}
 \maketitle

\begin{abstract}
In this paper,  the weak
Harris theorem developed in \cite{HMS11} is illustrated  by using a
straightforward  Wasserstein coupling, which implies the exponential
ergodicity of the functional solutions to a range of neutral type
SDEs with infinite length of memory. A concrete example is presented
to illustrate the main result.

\noindent
 AMS Subject Classification:\  34K50, 37A30, 60J05  \\
\noindent
 Keywords: ergodicity, neutral type stochastic differential equation, infinite memory,
 weak Harris' theorem, Wasserstein coupling
 
 \end{abstract}
 \vskip 2cm

\section{Introduction}

The erodicity theory is a rich and active area in the study of
Markov processes and related topics.  Existing results include both
qualitative characterizations (for instance, existence and
uniqueness of invariant probability measures, strong Feller
property, irreducibility) and quantitative estimates (convergence
rate of Markov transition semigroups, gradient and heat kernel
estimates, etc.). Among many other references, we would like  to
mention
  \cite{CL,DZ,H16,H08,MT93,Wang04} for the study of non-degenerate stochastic differential equations
(SDEs) and stochastic partial differential equations (SPDEs), and  \cite{BGM,DMS13,GS,GW,HM,MSH,Vil09,W17} for
 degenerate SDEs/SPDEs. In these references, several different probability distances (for example, total variational distance, $L^2$ distance,
 and  Warsserstein distance) have been adopted to measure the convergence rate of
 Markov transition semigroups. Efficient tools developed in the literature include  functional inequalities
 (for instance, weak Poincar\'e, Poincar\'e, and log-Sobolev inequalities), Lyapunov type criteria, Harris' theorem, and coupling method, etc.

For path-dependent SDEs (i.e., the coefficients depend on the
history), which are also called functional SDEs or SDEs with memory,
the solutions are no longer Markovian. In this case,  one
investigates the functional solutions (i.e., the segment process,
also called window process, of the solutions), which are Markov
processes on the path space determined by the length of memory.
However, the above
 tools mentioned are very hard to apply to this type
infinite-dimensional Markov processes: \beg{enumerate}
\item[$\bullet$] Due to the lack of characterization on Dirichlet
forms, functional inequalities are not yet established;
\item[$\bullet$] The Lyapunov condition on the path space  is less explicit  since the formulation of infinitesimal generator  is not yet available;
on account of  the same reason,  the classical coupling argument via
coupling operator is invalid;
\item[$\bullet$] Since the functional solutions are highly degenerate (infinite-dimensional Markov processes with finite-dimensional noises),
the classical Harris' theorem does not apply.
  \end{enumerate}

In recent years, some new approaches have been developed to
investigate the ergodicity and related properties for path-dependent
SDEs. When the noise term is path-independent and the drift depends
only on a finite segment  of path, the ergodicity under the total
variational distance was investigated in \cite{BS}, while gradient
estimates and Harnack type inequalities (which in particular imply
the strong Feller and irreducibility) have been established in
\cite{BWY11,BWY11b,WY10,Wbook}, to name a few, by using coupling by
change of measures. When the noise part is also path-dependent, but
both drift and noise parts depend only on  a fixed length of past
path, a weak Harris' theorem has been established in \cite{HMS11} to
derive the exponential ergodicity under
  the Wasserstein distance. In case the noise is path-dependent,  we would like to emphasize  that  the ergodicity under the total variational distance and the strong
    Feller property are not available  since   the laws of functional solutions with different initial data are mutually singular.
    The weak Harris' theorem has  been applied in, e.g., \cite{B14,CH,MT,TM} to establish the ergodicity for highly degenerate stochastic dynamical systems including Markov processes with random switching.

 In this paper, we  aim to  investigate  the
exponential ergodicity  for path-dependent SDEs of neutral type and
with infinite length of memory. Such kind of model  fits more real
world systems whose time evolution depends on the whole history (cf.
\cite[Chapter 6]{M08}). Intuitively, the farer the history, the
weaker the influence to the evolution of the system. So, in the
following we will take a reference norm on the path space which
indicates that the influence of history exponentially  decay when
the time goes to $-\infty$.

 For an integer $d\ge1$, let $(\R^d,\<\cdot,\cdot\>,|\cdot|)$ be  the standard
$d$-dimensional Euclidean space, and
$\R^d\otimes\R^d$  the family of all $d\times d$-matrices
 equipped with the Hilbert-Schmidt norm
$\|\cdot\|_{\rm HS}$.  $\C=C((-\8,0];\R^d)$ stands for   the space
of all continuous maps $f:(-\8,0]\rightarrow\R^d$.  For a map
$f(\cdot): (-\infty,\infty)\to \R^d$, define its segment map
$f_\cdot: [0,\infty) \to   \C$  by setting
$$f_t(\theta)= f(t+\theta),\ \ t\ge 0, \theta\in (-\infty,0].$$
For  a
fixed number
 $r\in(0,\8),$ let
\begin{equation*}
\mathscr{C}_r=\Big\{\phi\in
\C:\|\phi\|_r:=\sup_{-\8<\theta\le0}(\e^{r\theta}|\phi(\theta)|)<\8\Big\}.
\end{equation*}
Then,  $(\C_r, \|\cdot\|_r)$  is
 a Polish  space.  The norm $\|\cdot\|_r$ fits the intuition of exponential decay with regard to the influence of history; that is,  the contribution to
  the norm from   the   history at time $\theta<0$ has a minus exponential discount $\e^{\theta
  r}$. For any $\theta\in(-\8,0]$, let $\xi_0(\theta)\equiv{\bf0},$  a $d$-dimensional zero
  vector.

Consider the following path-dependent SDE on $\R^d$ of neutral type
\begin{equation}\label{eq1}
\d \{X(t)-G(X_t)\}=b(X_t)\d t+\si(X_t)\d W(t),~~~t>0, ~~~X_0=\xi\in\C_r,
\end{equation}
where $G,b:\C_r\to\R^d$ and $\si:\C_r\to\R^d\otimes\R^d$ are
measurable with $G(\xi_0)={\bf0}$, $(X_t)_{t\ge0}$ is the segment
process associated with   $(X(t))_{t\ge0}$, $(W(t))_{t\ge0}$ is the
$d$-dimensional Brownian motion  on a complete filtration
probability space $(\OO,\F, (\F_t)_{t\ge 0},\P)$.
 For more motivating
examples of \eqref{eq1}, please refer to \cite[p.201-2]{M08}.

A continuous adapted process $(X(t))_{t\ge 0}$ is called a solution
to \eqref{eq1} with the initial value $X_0$, if $\P$-a.s.
$$X(t)= X(0)+ G(X_t)-G(X_0)+\int_0^t b(X_s)\d s + \int_0^t \si(X_s) \d W(s),\ \ t\ge 0.$$ 
We call $(X_t^\xi)_{t\ge 0}$ a functional solution to \eqref{eq1}
with the initial value $X_0^\xi=\xi\in\C_r$.

To investigate  the exponential convergence of $P_t(\xi,\cdot)$
under the metric $\mathbb W_{\rr_r}$, we   impose the following
assumptions.
 \begin{enumerate}
 \item[({\bf A0})]  $b$ and $\si$ are continuous and bounded on bounded subsets of $\C_r$, and there exists $\aa\in (0,1)$ such that
$$|G(\xi)-G(\eta)|\le \aa\|\xi-\eta\|_r,\ \ \xi,\eta\in \C_r.$$
\item[({\bf A1})] There exists a constant $L_0>0$ such that
 $$ \<\xi(0)-\eta(0)+G(\eta)-G(\xi),b(\xi)-b(\eta)\>^+ + \|\si(\xi)-\si(\eta)\|_{\rm HS}^2 \le L_0\|\xi-\eta\|_r^2,\ \ \xi,\eta\in \C_r. $$
\item[({\bf A2})] For each $\xi\in\C_r$, $\si(\xi)$ is invertible,
and
 $\sup_{\xi\in\C_r}\{\|\si(\xi)\|+
 \|\si(\xi)^{-1}\|\}<\8$;
\item[({\bf A3})]
There exists a continuous function $V:\C_r\rightarrow\R_+$ with
$\lim_{\|\xi\|_r\rightarrow\8}V(\xi)=\8$ such that
\begin{equation*}
 {P}_t V(\xi)\le K\e^{-\gamma\, t}V(\xi)+K
\end{equation*}
holds for some constants $K, \gamma>0$.
\end{enumerate}

\begin{thm}\label{ex} Let
  $({\bf A0})$ and $({\bf A1})$ hold. Then   
  $\eqref{eq1} $ has a unique solution.
   Moreover, there exists a constant $C>0$ such that
  $$\E\|X_t^\xi\|_r^2\le C(1+ \|\xi\|_r^2) \e^{C\,t},\ \ t\ge 0, ~~\xi\in\C_r.$$
\end{thm}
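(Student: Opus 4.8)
The plan is to make the neutral term $U(t):=X(t)-G(X_t)$ the basic object of study, since only $U$ is a genuine It\^o process, $\d U(t)=b(X_t)\,\d t+\si(X_t)\,\d W(t)$, while $X$ is not. The decisive observation is that for $\xi,\eta\in\C_r$ the vector appearing in $({\bf A1})$, namely $\xi(0)-\eta(0)+G(\eta)-G(\xi)$, is exactly the difference of the two neutral terms evaluated at the current time; hence $({\bf A1})$ is precisely the one-sided bound needed when It\^o's formula is applied to $|U(t)-\tilde U(t)|^2$, and $({\bf A1})$ with $\eta=\xi_0$ (using $G(\xi_0)={\bf0}$) is what is needed for $|U(t)|^2$. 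Throughout, the passage back to the path is made through $({\bf A0})$: since $|G(\phi)|=|G(\phi)-G(\xi_0)|\le\aa\|\phi\|_r$ with $\aa<1$, any weighted supremum $\sup_{0\le u\le t}\e^{ru}|X(u)|$ is dominated by the corresponding quantity for $U$ plus $\aa$ times itself, and the self-term is absorbed. I will also use repeatedly that, because the two paths agree on $(-\8,0]$ (or the history is $\xi$), one has $\|X_s-\tilde X_s\|_r=\e^{-rs}\sup_{0\le u\le s}\e^{ru}|X(u)-\tilde X(u)|$.

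For uniqueness, let $X,\tilde X$ share the initial value $\xi$ and put $V(t)=U(t)-\tilde U(t)$. Applying It\^o's formula to $\e^{2rt}|V(t)|^2$ and invoking $({\bf A1})$ with $\xi=X_s,\eta=\tilde X_s$ (so that the inner-product argument is exactly $V(s)$) gives $2\<V(s),b(X_s)-b(\tilde X_s)\>+\|\si(X_s)-\si(\tilde X_s)\|_{\rm HS}^2\le 2L_0\|X_s-\tilde X_s\|_r^2$. Taking the supremum in time, estimating the martingale by Burkholder--Davis--Gundy and Young's inequality, and absorbing the $G$-contribution via $\aa<1$, one reaches an inequality of the form $\E g(t)^2\le C\int_0^t\E g(s)^2\,\d s+\tfrac12\E g(t)^2$, where $g(t)=\sup_{0\le u\le t}\e^{ru}|V(u)|$. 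A localization by stopping times guarantees $\E g(t)^2<\8$ so the last term may be absorbed, and Gronwall's lemma forces $V\equiv0$, whence $X=\tilde X$.

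For existence I would truncate the drift to a globally Lipschitz $b_n$ preserving $({\bf A1})$ (the diffusion $\si$ is already Lipschitz, by the Hilbert--Schmidt term in $({\bf A1})$). For each $n$ the neutral SDE is solved by Picard iteration: at every step the implicit identity $X(t)=R_n(t)+G(X_t)$, with $R_n$ the already-constructed drift-and-It\^o-integral part, is solved pathwise because $\phi\mapsto R_n+G(\phi_\cdot)$ is an $\aa$-contraction in the supremum norm on $[0,T]$. The a priori moment bound below, proved uniformly in $n$, yields $L^2$-boundedness, and the monotone estimate from the uniqueness step (together with the fact that $b_n=b$ on bounded sets) shows $\{X^n\}$ is Cauchy in $L^2$ up to localization; its limit solves \eqref{eq1}.

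Finally, for the moment bound, apply It\^o's formula to $\e^{2rt}|U(t)|^2$ and use $({\bf A1})$ with $\eta=\xi_0$ to obtain $2\<U(s),b(X_s)\>+\|\si(X_s)\|_{\rm HS}^2\le 4L_0\|X_s\|_r^2+|U(s)|^2+C_0$, where $C_0$ depends only on $|b(\xi_0)|$ and $\|\si(\xi_0)\|$, finite by $({\bf A0})$. Writing $H(t)=\sup_{0\le u\le t}\e^{ru}|X(u)|$, one has $H(t)\le(1-\aa)^{-1}\big(\sup_{0\le u\le t}\e^{ru}|U(u)|+\aa\|\xi\|_r\big)$ and $\e^{2rs}\|X_s\|_r^2\le 2\|\xi\|_r^2+2H(s)^2$; substituting these, taking the supremum over $[0,t]$, and controlling the martingale by Burkholder--Davis--Gundy and Young's inequality produces a Gronwall inequality for $\E\sup_{0\le u\le t}\e^{2ru}|U(u)|^2$. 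This gives $\E H(t)^2\le C(1+\|\xi\|_r^2)\e^{Ct}$, and since $\|X_t\|_r^2\le 2\e^{-2rt}(\|\xi\|_r^2+H(t)^2)$ the claim follows. I expect the main obstacle to be closing this Gronwall loop: because $({\bf A1})$ controls the drift only through the \emph{one-sided} inner product with the neutral term rather than through $|b|$, every estimate must be routed through $U$ and transferred back to $X$ across the weighted supremum norm $\|\cdot\|_r$, and it is only the contraction constant $\aa<1$ that permits the neutral self-term to be absorbed, with a preliminary localization ensuring all the suprema have finite second moment before that absorption.
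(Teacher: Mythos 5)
First, a point of reference: the paper deliberately gives \emph{no} proof of Theorem \ref{ex} (``we are not going for giving a proof for this Theorem''), referring instead to \cite{VS,M08,M84,WYM}; so your attempt can only be judged on its own merits and against the computations the paper does carry out elsewhere. On that basis, your uniqueness argument and your a priori moment bound are sound and are essentially the same manipulations the paper performs in deriving \eqref{RPPa}--\eqref{RPPb} and in the proof of Lemma \ref{le2}: work with the semimartingale $U(t)=X(t)-G(X_t)$, note that the vector in $({\bf A1})$ is exactly the difference of neutral terms, apply It\^o's formula to $\e^{2rt}|U(t)|^2$ (resp.\ $\e^{2rt}|U(t)-\tilde U(t)|^2$), control the martingale by BDG and Young, pass between $U$ and $X$ through the weighted supremum using $|G(\phi)|\le\aa\|\phi\|_r$ with $\aa<1$, localize, and close with Gronwall. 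Your identity $\e^{2rs}\|X_s-\tilde X_s\|_r^2=\sup_{0\le u\le s}\e^{2ru}|X(u)-\tilde X(u)|^2$ for paths sharing the initial segment is exactly the paper's mechanism, and your use of $({\bf A1})$ with $\eta=\xi_0$ together with $G(\xi_0)={\bf 0}$ to get the one-sided bound for $|U|^2$ is correct.

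The genuine gap is in the existence step. You propose to ``truncate the drift to a globally Lipschitz $b_n$ preserving $({\bf A1})$'' and later to use that ``$b_n=b$ on bounded sets'' to show the approximations are Cauchy. Under $({\bf A0})$--$({\bf A1})$ the drift $b$ is only \emph{continuous} and bounded on bounded subsets of $\C_r$, with a one-sided (weak monotone) inner-product bound; it need not be locally Lipschitz. Consequently no globally Lipschitz $b_n$ can agree with $b$ on a bounded set on which $b$ fails to be Lipschitz, so the two requirements of your approximation scheme are mutually incompatible, and the Cauchy argument that relies on $b_n=b$ on bounded sets collapses. (The diffusion is not the issue: $({\bf A1})$ does make $\si$ globally Lipschitz, as you note, and your pathwise resolution of the implicit relation $X(t)=R(t)+G(X_t)$ by an $\aa$-contraction is fine.) To repair existence one has to follow the route of \cite{VS} — which is precisely the result the paper says Theorem \ref{ex} extends — i.e.\ an Euler--Maruyama or mollification scheme in which $b_n\to b$ only uniformly on bounded sets, combined with a tightness/stability argument that exploits the weak monotonicity $({\bf A1})$ to identify the limit, rather than exact agreement of $b_n$ with $b$. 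As written, your existence proof does not go through.
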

 
 In the case   $G\equiv {\bf0}$,    the existence and
   uniqueness of solutions
  has been investigated in, e.g.,
  \cite{M08,M84,WYM} under the local Lipschitz  and
coercive conditions. When the length of memory is finite, the local
Lipschitz condition was developed in \cite{VS} by the local weak
monotone condition.
Theorem \ref{ex}  extends the result of \cite{VS} to path-dependent
SDEs  of neutral type and with infinite length of memory. Since we are interesting in 
ergodicity of the functional solutions, we are not going for giving a proof for this Theorem.

Next, we consider the ergodicity of the functional solutions to
\eqref{eq1}. In the situation of  Theorem \ref{ex},   for any
$\xi\in \C_r$, let $(X^\xi(t))_{t\ge 0}$ be the unique solution to
\eqref{eq1} with the initial datum  $X_0^\xi=\xi$. Then, the
functional solution $(X_t^\xi)_{t\ge 0}$ is a Markov process with
the semigroup
$$P_t f(\xi):= \E f(X_t^\xi)=\int_{\C_r} f(\eta)P_t(\xi,\d\eta),\ \ t\ge 0,\, f\in \B_b(\C_r), \,\xi\in \C_r,$$
where $P_t(\xi,\cdot)$ is the distribution of $X_t^\xi$.
 As explained above,  $P_t(\xi,\cdot)$ does not converges in the total variational distance. To investigate the ergodicity,  we will take the Wasserstein distance induced
 by the distance
 \begin{equation}\label{eq6}
 \rr_r(\xi,\eta):= 1\land \|\xi-\eta\|_r,\ \ \xi,\eta\in
\C_r.\end{equation} For any $\mu,\nu\in \scr P(\C_r)$, the
collection of all probability measures on $\C_r$, the
$L^1$-Warsserstein distance between $\mu$ and $\nu$ induced by
$\rr_r$ is defined by \beq\label{WS}\mathbb
W_{\rr_r}(\mu,\nu)=\inf_{\pi\in \C(\mu,\nu)} \int_{\C_r\times\C_r}
\rr_r(\xi,\eta) \pi(\d\xi,\d\eta),\end{equation} where $\C(\mu,\nu)$
is the set of all couplings of $\mu$ and $\nu$; that is, $\pi\in
\C(\mu,\nu)$ if and only if  $\pi$ is a probability measure on
$\C_r\times\C_r$ such that $\pi(\cdot\times\C_r)=\mu$ and
$\pi(\C_r\times \cdot)=\nu$.

Condition {\bf(A1)} with $G(\xi_0)={\bf 0}$ implies {\bf (H2)} and {\bf (H3)}  and   according to    Theorem \ref{ex}  yields
 the existence and uniqueness of
  \eqref{eq1}.  For the Lyapunov function $V$ in ({\bf
A3}), let
$$\rr_{r,V}(\xi,\eta):= \sqrt{\rr_r(\xi,\eta)(1+V(\xi)+V(\eta))},\ \ \xi,\eta\in \C_r.$$

\begin{thm}\label{er}
  Assume $({\bf H1})$ and $({\bf A1})$-$({\bf A3})$.
  Then $P_t$  has a unique invariant probability measure $\pi$, and there exist constants $c,\ll>0$ such that
\begin{equation}\label{QPP2}
\mathbb {W}_{\rr_{r,V}}(\mu P_t, \nu P_t)\le c\,\e^{-\ll t}\mathbb
{W}_{\rr_{r,V}}(\mu, \nu),\ \ \mu,\nu\in \scr P(\C_r),\, t\ge 0.
\end{equation}  Consequently,  there exists a constant $C>0$ such that
\beq\label{QPP3} \mathbb {W}_{\rr_{r,V}}( P_t (\xi,\cdot), \pi)\le
C\,\e^{-\ll t}\sqrt{1+V(\xi)},\ \ t\ge 0.\end{equation}
\end{thm}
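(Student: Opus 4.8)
The plan is to read \eqref{QPP2} off the weak Harris theorem of \cite{HMS11}, applied to the base distance $d:=\rr_r$ of \eqref{eq6} and the Lyapunov function $V$ of $({\bf A3})$. The weighted distance attached by that theorem to the pair $(d,V)$ is exactly $\ss{d(\xi,\eta)(1+V(\xi)+V(\eta))}=\rr_{r,V}(\xi,\eta)$, so the contraction \eqref{QPP2} is precisely the conclusion the abstract result delivers once its two structural hypotheses are in force. The first is a drift (Lyapunov) condition, supplied verbatim by $({\bf A3})$; moreover the coercivity $\lim_{\|\xi\|_r\to\8}V(\xi)=\8$ forces each sublevel set $\{V\le R\}$ to be bounded in $\|\cdot\|_r$, so that Theorem \ref{ex} yields uniform second moment bounds for solutions issued from it. The second hypothesis, which $({\bf H1})$ together with the consequences $({\bf H2})$ and $({\bf H3})$ of $({\bf A1})$ are meant to encode, is a Wasserstein coupling condition asserting that $P_{t_*}$ contracts $\rr_r$ on nearby data and pulls the laws of solutions started in a fixed sublevel set strictly inside the unit ball of $\mathbb W_{\rr_r}$. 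All the real work lies in producing this coupling.

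The engine of the coupling is the exponential discount $\e^{r\theta}$ built into $\|\cdot\|_r$. The decisive observation is that the part of $\|X_t^\xi-X_t^\eta\|_r$ carried by the frozen pre-zero history is automatically damped, since $X^\xi(u)=\xi(u)$ for $u\le0$:
\[\sup_{\theta\le-t}\e^{r\theta}|X^\xi(t+\theta)-X^\eta(t+\theta)|=\e^{-rt}\|\xi-\eta\|_r,\]
whence
\[\|X_t^\xi-X_t^\eta\|_r\le\max\Big\{\e^{-rt}\|\xi-\eta\|_r,\ \sup_{0\le s\le t}|X^\xi(s)-X^\eta(s)|\Big\}.\]
It therefore suffices to govern the ``new'' increment $\sup_{0\le s\le t}|X^\xi(s)-X^\eta(s)|$ on a fixed window. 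Writing $Z=X^\xi-X^\eta$ and $\GG_s=G(X_s^\xi)-G(X_s^\eta)$, It\^o's formula for $|Z(s)-\GG_s|^2$ feeds directly on $({\bf A1})$, which bounds exactly the inner product $\<Z(0)-\GG,\,b(\xi)-b(\eta)\>^+$ and the Hilbert-Schmidt diffusion difference, while $({\bf A0})$ gives the a priori control $|\GG_s|\le\aa\|X_s^\xi-X_s^\eta\|_r$ with $\aa<1$; a Gronwall argument then converts these into a quantitative bound on $\E\|X_t^\xi-X_t^\eta\|_r$. For data lying in a common level set $\{V\le R\}$ but far apart, this is not enough, and here I would invoke $({\bf A2})$: since $\si$ is invertible with $\si^{-1}$ uniformly bounded, a change-of-measure (Girsanov) coupling forces $X^\xi(s)=X^\eta(s)$ for all $s\in(0,T]$ on an event $A$ with $\P(A)\ge p(R)>0$, the Radon-Nikodym cost being controlled by the boundedness of $b,\si,\si^{-1}$ on $\{V\le R\}$ and the moments from Theorem \ref{ex}. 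On $A$ the new increment vanishes and only the $\e^{-rT}$-discounted history survives, so for $T$ large $\E\,\rr_r(X_T^\xi,X_T^\eta)\le 1-p(R)\big(1-\texto(1)\big)<1$ uniformly on $\{V\le R\}$, which is the required $\rr_r$-smallness.

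The step I expect to be genuinely delicate is the contraction estimate for nearby data. Because $({\bf A1})$ supplies only a one-sided Lipschitz bound with a \emph{positive} constant $L_0$, there is no dissipativity to exploit, so a gain factor $\aa<1$ cannot come from the It\^o/Gronwall estimate alone; it must be extracted from the memory discount $\e^{-rt}$ acting on the old history together with the coupling that prevents the new increment from inflating, and matching these two effects against the neutral term $G$ over a single window $t_*$ is the crux. A secondary difficulty is to make the merging probability $p(R)$ uniform over the level set, which requires uniform control of the Girsanov density and hence the moment bounds of Theorem \ref{ex}.

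Once \eqref{QPP2} is in hand the remaining claims are routine. By \eqref{QPP2} the map $\mu\mapsto\mu P_{t}$ is a strict $\mathbb W_{\rr_{r,V}}$-contraction on the complete metric space of probability measures with finite $V$-moment, which is nonempty and $P_t$-invariant by the Lyapunov bound; Banach's fixed point theorem then furnishes the unique invariant measure $\pi$, with $\int V\,\d\pi<\8$. Finally \eqref{QPP3} follows by taking $\mu=\dd_\xi$ and $\nu=\pi$ in \eqref{QPP2} and using $\pi P_t=\pi$ together with the bound $\mathbb W_{\rr_{r,V}}(\dd_\xi,\pi)\le C\ss{1+V(\xi)}$, which in turn rests on $\int V\,\d\pi<\8$.
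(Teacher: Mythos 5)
Your overall architecture --- the weak Harris theorem of \cite{HMS11} applied to the base distance $\rr_r$ and the Lyapunov function $V$ of ({\bf A3}), followed by the routine passage from the abstract contraction to \eqref{QPP2} and \eqref{QPP3} --- coincides with the paper's, and that final deduction is fine. The problems are in the two coupling estimates that carry all the content. For the smallness of the level set, you propose a Girsanov coupling that ``forces $X^\xi(s)=X^\eta(s)$ for all $s\in(0,T]$'' on an event of probability $p(R)>0$. This cannot work: solutions are continuous, so $X^\xi(0^+)=\xi(0)\neq\eta(0)=X^\eta(0^+)$ for generic data in $\{V\le 4K\}$, and, as the introduction of the paper emphasizes, the laws of the segment processes from different initial data are mutually singular, so no coupling can identify them on a set of positive probability in the total-variation style your argument needs. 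The paper's Lemma \ref{le3} sidesteps this entirely: it runs the two solutions with \emph{independent} Brownian motions, uses the irreducibility estimate of Lemma \ref{le1} (a comparison of the radial process of $\LL^{X^\xi}$ with a uniformly elliptic one-dimensional diffusion, via ({\bf A2})) to show each independently lands in the small ball $B_{\dd/4}$ with probability at least $\aa_t>0$, and on that event $\rr_{r,\dd}(X_t^\xi,\tilde X_t^\eta)\le 1/2$; no merging of paths is required.

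More seriously, you explicitly leave open the contraction estimate for nearby data, correctly observing that ({\bf A1}) carries no dissipativity and that the synchronous It\^o/Gronwall bound (the paper's Lemma \ref{le2}) only yields growth of order $\e^{Kt}\|\xi-\eta\|_r^2$, which overwhelms the memory discount $\e^{-rt}$. This is the crux of the whole proof, and your proposal does not supply the idea that resolves it. The paper's Lemma \ref{le4} introduces an auxiliary process $Y^\eta$ solving \eqref{eq1} with the \emph{added} drift $\ll\,\LL^{\xi,\eta}(s)$, which manufactures dissipativity and gives $\E\|X^\xi_s-Y^\eta_s\|_r^2\le c\,\e^{-r_0 s}\|\xi-\eta\|_r^2$; it then removes this drift by Girsanov, truncating the density with a stopping time $\tau_\vv$ so that the total Girsanov cost is of order $\vv^{-1}\|\xi-\eta\|_r^2$, and estimates $\mathbb W_{\rr_{r,\dd}}$ through the explicit Wasserstein coupling \eqref{SPP2} of $\P$ and the tilted measure, splitting into the three terms $I_1,I_2,I_3$ of \eqref{SPP3} and balancing $t$, $\vv$ and $\dd$ at the end. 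Without this asymptotic-coupling device (or an equivalent one) your proof does not close.
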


The proof of Theorem \ref{er} is based on the weak Harris' theorem
developed in \cite{HMS11}; see Theorem \ref{Harris} below
for more details. 
To meet the conditions of this theorem, one has to overcome the
difficulties caused  by the infinite length of memory.

Unlike conditions   ({\bf A1}) and ({\bf A2})  which are explicitly
imposed on the coefficients,  the Lyapunov condition ({\bf A3}) is
set by means of the semigroup $P_t$ which  is
  less explicit. In many cases, one may  verify  ({\bf A3})  by using the Lyapunov
 condition
\begin{equation}\label{c9}
\mathscr{L}V(\xi)\le-\ll V(\xi)+c,~~~\xi\in \C_r
\end{equation}  for some constants $c,\ll>0$, where $\mathscr{L}$ is the extended
generator corresponding to the semigroup $( {P}_t)_{t\ge0}.$
However, as already explained before, $\mathscr{L}$ is not yet
available for the present model. Indeed, there are a number of
examples   satisfying ({\bf A3}) but not \eqref{c9}; see, e.g.,
\cite{BYY} for such an example on   path-dependent SDEs without
dissipativity. In this spirit,  we present below
  explicit conditions for
({\bf A3}).

\begin{prp}\label{pro}\label{Ly} Let $\mu_0\in \scr P((-\8,0])$ such that
\begin{equation}\label{eq10}
 \dd_r(\mu_0):=\int_{-\infty}^0
\e^{-2r\theta}\mu_0(\d\theta)<\infty,\end{equation} and set
\begin{equation}\label{eq12}
\bb:=\Big(1+\sqrt{ \aa_1  + \aa_2 \dd_r(\mu_0)}\Big)^2.
\end{equation}
Then $({\bf A3})$ holds for   $V(\xi):=\|\xi\|_r^2$ provided that
the following two conditions hold:
\begin{enumerate}
\item[$(i)$]
For any $\xi\in\C_r$,   there exist constants $\aa_1,\aa_2>0$ with
$\aa_1+\aa_2\dd_r(\mu_0)<1$ such that
\begin{equation}\label{eq11}
|G(\xi)|^2\le\aa_1|\xi(0)|^2+\aa_2\int_{-\8}^0|\xi(\theta)|^2\mu_0(\d
\theta).
\end{equation}
\item[$(ii)$]There exist  constants $c_0,\ll_1,\ll_2>0$ with
$\gamma:=\ll_1-2r\bb-\ll_2\dd_r(\mu_0)>0$ such that
\begin{equation}\label{r1}\beg{split}
&2\<\xi(0)-G(\xi),b(\xi)\>+\|\si(\xi)\|_{\rm HS}^2\le c_0
-\ll_1|\xi(0)|^2+\ll_2\int_{-\8}^0|\xi(\theta)|^2\mu_0(\d\theta),
\end{split}
\end{equation}
\begin{equation}\label{r11}
\|\si(\xi)\|_{\rm HS}^2\le
c_0\Big(1+|\xi(0)|^2+\int_{-\8}^0|\xi(\theta)|^2\mu_0(\d\theta)\Big).
\end{equation}
\end{enumerate}

\end{prp}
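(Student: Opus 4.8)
The plan is to establish the Lyapunov contraction $P_t V(\xi)\le K\e^{-\gamma t}V(\xi)+K$ for $V(\xi)=\|\xi\|_r^2$ by tracking the evolution of $\E\|X_t^\xi\|_r^2$. Since $\|X_t\|_r=\sup_{\theta\le0}\e^{r\theta}|X(t+\theta)|$ is a supremum over the whole past, the first step is to split the segment norm at time $t$ into the contribution of the genuinely new path on $[0,t]$ and the contribution of the old initial data, namely
\begin{equation*}
\|X_t\|_r^2=\Big(\sup_{-t\le\theta\le0}\e^{r\theta}|X(t+\theta)|\vee\sup_{\theta<-t}\e^{r\theta}|X(t+\theta)|\Big)^2.
\end{equation*}
The far-past part is dominated by $\e^{-2rt}\|\xi\|_r^2$ because shifting the window by $t$ produces the exponential discount $\e^{-2rt}$; this is what will ultimately furnish the decay factor. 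The recent part must be controlled by an Euler/It\^o estimate of $\sup_{0\le s\le t}\e^{-2r(t-s)}|X(s)|^2$.

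First I would apply It\^o's formula to $|X(s)-G(X_s)|^2$ rather than to $|X(s)|^2$ directly, since \eqref{eq1} is written in neutral form $\d\{X(t)-G(X_t)\}$; the drift and diffusion terms then combine into $2\<\xi(0)-G(\xi),b(\xi)\>+\|\si(\xi)\|_{\rm HS}^2$, which is exactly the left side of the dissipativity hypothesis \eqref{r1}. Using $(i)$ to pass between $|X(s)|^2$ and $|X(s)-G(X_s)|^2$ (via $|G(\xi)|^2\le\aa_1|\xi(0)|^2+\aa_2\int|\xi(\theta)|^2\mu_0(\d\theta)$ and an elementary inequality $|a-b|^2\ge(1-\vv)|a|^2-(\vv^{-1}-1)|b|^2$), and invoking $(ii)$ to bound the generator action, I would obtain a differential inequality of the form
\begin{equation*}
\frac{\d}{\d s}\E|X(s)-G(X_s)|^2\le c_0-\ll_1\E|X(s)|^2+\ll_2\int_{-\8}^0\E|X(s+\theta)|^2\mu_0(\d\theta).
\end{equation*}
The measure $\mu_0$ couples the present to the whole history; the quantity $\dd_r(\mu_0)=\int\e^{-2r\theta}\mu_0(\d\theta)$ is precisely the weight that converts a history integral $\int|X(s+\theta)|^2\mu_0(\d\theta)$ into a multiple of $\sup_{u\le s}\e^{2r(u-s)}|X(u)|^2\le\e^{2rs}\|X_s\|_r^2$-type bounds, explaining why $\aa_2\dd_r(\mu_0)$ and $\ll_2\dd_r(\mu_0)$ enter the threshold conditions $\aa_1+\aa_2\dd_r(\mu_0)<1$ and $\gamma=\ll_1-2r\bb-\ll_2\dd_r(\mu_0)>0$.

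The delicate part is that the norm $\|\cdot\|_r$ is a \emph{supremum}, not an integral, so It\^o's formula does not apply to $\|X_t\|_r^2$ itself; one must route everything through pointwise estimates on $\E|X(s)|^2$ and only then re-take the weighted supremum. I would therefore define $g(s):=\sup_{0\le u\le s}\e^{-2r(s-u)}\E|X(u)|^2$ together with the initial reservoir $\e^{-2rs}\|\xi\|_r^2$, derive from the differential inequality a closed Gronwall-type bound for the combination $\E|X(s)-G(X_s)|^2$, absorb the history term using $\dd_r(\mu_0)$ and the constant $\bb$ in \eqref{eq12} (which accounts for the $\aa$-contraction loss incurred when trading $X-G(X)$ for $X$), and finally reassemble $\E\|X_t\|_r^2$. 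The main obstacle, and where the constant $\bb$ and the two smallness thresholds are forced, is keeping the feedback from the history term $\ll_2\dd_r(\mu_0)$ strictly below the dissipation rate $\ll_1$ after paying the $2r\bb$ penalty from the exponential weight; once $\gamma>0$ is secured the exponential decay $\E\|X_t\|_r^2\le K\e^{-\gamma t}\|\xi\|_r^2+K$ follows, which is exactly \textbf{(A3)}.
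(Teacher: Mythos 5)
Your overall strategy coincides with the paper's: apply It\^o's formula to the weighted neutral quantity $\e^{2rs}|\LL^X(s)|^2=\e^{2rs}|X(s)-G(X_s)|^2$ so that \eqref{r1} can be invoked, use Fubini and the weight $\dd_r(\mu_0)$ to fold the history integral $\int_{-\8}^0|X(s+\theta)|^2\mu_0(\d\theta)$ back into $\int_0^t\e^{2rs}|X(s)|^2\d s$ plus a $t\|\xi\|_r^2$ term, trade $|\LL^X|^2$ for $|X|^2$ via \eqref{eq11} at the cost of the constant $\bb$ in \eqref{eq12}, and close a Gronwall-type argument using $\gamma=\ll_1-2r\bb-\ll_2\dd_r(\mu_0)>0$. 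All of this matches the paper's computation, as does the observation that the far past contributes only $\e^{-2rt}\|\xi\|_r^2$.

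There is, however, a genuine gap in the final reassembly step. The quantity to be controlled is $P_tV(\xi)=\E\|X_t^\xi\|_r^2$, which up to the initial-segment term equals $\e^{-2rt}\,\E\sup_{0\le s\le t}(\e^{2rs}|X(s)|^2)$, i.e.\ the \emph{expectation of a running supremum}. Your plan routes everything through the pointwise moments $\E|X(s)|^2$ and then ``re-takes the weighted supremum'' via $g(s):=\sup_{0\le u\le s}\e^{-2r(s-u)}\E|X(u)|^2$; but $\sup_u\E(\cdots)\le\E\sup_u(\cdots)$ goes the wrong way, so a bound on $g$ does not bound $\E\|X_t\|_r^2$. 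To pass from pointwise moment bounds to the expected supremum one must control $\E\sup_{0\le s\le t}\big|\int_0^s\e^{2ru}\<\LL^X(u),\si(X_u)\d W(u)\>\big|$ by a maximal (BDG) inequality; this is precisely step \eqref{d1} of the paper, and it is the only place where hypothesis \eqref{r11} enters. Tellingly, your proposal never uses \eqref{r11}, which confirms the step is missing rather than merely implicit. The repair is standard: apply BDG, absorb $\ff12\E\sup_{0\le s\le t}(\e^{2rs}|\LL^X(s)|^2)$ into the left-hand side, bound the remainder by $\int_0^t\e^{2rs}\E|X(s)|^2\d s$ using \eqref{r11} together with the Fubini identity, and then invoke the already-established integral bound \eqref{d2}; combined with \eqref{RPPb} this yields $\E\|X_t\|_r^2\le C(1+t)\e^{-2rt}\|\xi\|_r^2+C$ and hence \textbf{(A3)}.
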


To conclude this section, we present below a concrete example to
illustrate Theorem \ref{er}.

\begin{exa}\label{ex1}
Let $\mu_0(\d\theta)=\ff{1}{r_0}\e^{r_0\theta}\d\theta\in\scr
P((-\8,0])$ for some $r_0>2r$ and let
$$G(\xi)=\gg_1\int_{-\8}^0\xi(\theta)\mu_0(\d\theta),\,\,\,\,\, \si(\xi)=1+\gg_2\int_{-\8}^0(1\wedge|\xi(\theta)|)\mu_0(\d\theta),$$
\begin{equation*}
b(\xi)=-\gg_3\xi(0)-\gg_4\Big(\xi(0)-\gg_1\int_{-\8}^0\xi(\theta)\mu_0(\d\theta)\Big)^{\ff{1}{3}}+\gg_5\int_{-\8}^0\xi(\theta)\mu_0(\d\theta)
\end{equation*}
for some constants $\gg_i>0, i=1,\cdots,5.$ If
\begin{equation}\label{d4}
\gg_1^2<r_0(r_0-2r)~~~\mbox{ and }~~~
2\gg_3>2r\Big(1+\ff{\gg_1}{\sqrt{r_0(r_0-2r)}}\Big)^2+\ff{\gg_2^2}{r_0(r_0-2r)}+\ff{2(\gg_5+\gg_1\gg_3)}{\sqrt{r_0(r_0-2r)}},
\end{equation}
assertions in   Theorem  \ref{er} hold.
\end{exa}

The remainder of this paper is organized as follows.  In
Section \ref{sec3},   Theorem \ref{er} is proved by using weak
Harris' theorem and asymptotic coupling. In Section 4, we prove
Proposition \ref{Ly} and Example \ref{ex1}.

\section{Proof of Theorem \ref{er}}\label{sec3}

For simplicity, we introduce the following notation.  Let $f,g\in C(\R;\R^d)$, define
\begin{equation}\label{eq4}
 \LL^f(t)= f(t)-G(f_t),\ \ \LL^{f,g}(t)= \LL^f(t)-\LL^g(t),~~~~t\ge0.
 \end{equation} By
 ({\bf A0}) and using $\vv=\ff\aa{1-\aa}$,
we obtain
$$|f(t)-g(t)|^2\le (1+\vv) |\LL^{f,g}(t)|^2 + (1+\vv^{-1})\aa^2 \|f_t-g_t\|_r^2= \ff{|\LL^{f,g}(t)|^2}{1-\aa} +\aa\|f_t-g_t\|_r^2.$$
When $f_0=g_0$, this implies   \beg{align*} \e^{2rt}\|f_t-g_t\|_r^2
&= \sup_{0\le s\le t} (\e^{2rs} |f(s)-g(s)|^2 )\le \ff 1{1-\aa}
\sup_{0\le s\le t}(\e^{2rs}|\LL^{f,g}(s)|^2) +\aa\e^{2rt}
\|f_t-g_t\|_r^2,\end{align*} so  that  \beq\label{RPPa}
\e^{2rt}\|f_t-g_t\|_r^2\le \ff 1 {(1-\aa)^2}\sup_{0\le s\le t}
(\e^{2rs}|\LL^{f,g}(s)|^2),~~~~f_0=g_0.
\end{equation}
Similarly, \beq\label{RPPb}
\begin{split} \e^{2rt}\|f_t\|_r^2 &\le \ff{1}{1-\aa}\|f_0\|^2_r+\ff 1
{(1-\aa)^2}\sup_{0\le s\le t} (\e^{2rs}|\LL^{f}(s)|^2). 
\end{split}\end{equation}

We shall complete the proof of Theorem \ref{er} by the aid of
   weak Harris' theorem introduced in  \cite{HMS11}.
  For readers' convenience, we state it below in details. We first recall
 some notions.
\begin{defn} Let $\mathbb {X}$  be a Polish space, and  $( {P}_t)_{t\ge0}$ a Markov semigroup with transition kernel $ P_t(\xi,\cdot)$ on $\mathbb X$.  \beg{enumerate} \item[$(1)$]
A continuous function $V:\mathbb {X}\to \R_+$ is called a Lyapunov
function for  $( {P}_t)_{t\ge0}$,   if there exist constants $
\gamma, K>0$ such that
\beq\label{YPP}
 {P}_tV(\xi):=\int_\mathbb {X}V(\eta)
{P}_t(\xi,\d\eta)\le K\,\e^{-\gamma t} V(\xi)+K,~~~~~\xi\in \mathbb
{X},~~~t\ge0.
\end{equation}
\item[$(2)$]  A function $\rr:\mathbb {X}\times\mathbb
{X}\rightarrow [0,1]$ is said to be distance-like if it is
symmetric, lower semi-continuous, and $\rr(\xi,\eta)=0$ if and only
if $\xi=\eta$.
\item[$(3)$] A set
$A\subset \mathbb {X}$ is said to be $\rr$-small for $ {P}_t$, if
there exists $\varepsilon\in(0,1)$ such that
\begin{equation*}
\mathbb {W}_\rr( {P}_t(\xi,\cdot), {P}_t(\eta,\cdot))\le 1-
\vv,\quad  \xi,\eta\in A,
\end{equation*} where $\mathbb {W}_\rr$ is defined as in  \eqref{WS} for $(\mathbb X, \rr)$ replacing $(\C_r,\rr_r)$.
\item[$(4)$]   $\rr$ is said to be contractive for $ {P}_t$,    if there exists
$\vv\in(0,1)$ such that
\begin{equation*}
\mathbb {W}_\rr( {P}_t(\xi,\cdot), {P}_t(\eta,\cdot))\le \vv
\,\rr(\xi, \eta),\ \  \,\xi,\eta\in \mathbb X~\mbox{ with }
\rr(\xi,\eta)<1.
\end{equation*}\end{enumerate}
\end{defn}
The following result is   due to \cite[Theorem 4.8]{HMS11}.

\begin{thm}\label{Harris} Let $\rr$ be a distance-like function on $\mathbb X\times\mathbb X$, and  $V$  a Lyapunov function   such that $\eqref{YPP} $ holds for some constants $\gg,K>0$.   If there exists a constant $t^*>0$ such that  $\{V\le 4K\}$ is $\rr$-small and $\rr$ is contractive for $ P_{t^*}$, then   there exists a constant  $t>0$ such that
\begin{equation*}
\mathbb {W}_{\rr_V}(\mu  {P}_t, \nu  {P}_t)\le
 \ff 1 2 \mathbb {W}_{\rr_V}(\mu, \nu),~~~~\forall\,\,\mu,\nu\in\scr{P}(\mathbb {X}),
\end{equation*}
where $\rr_V(\xi,\eta):=\sqrt{\rr(\xi,\eta)(1+V(\xi)+V(\eta))},
\xi,\eta\in\mathbb X.$
\end{thm}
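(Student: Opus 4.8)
The plan is to prove Theorem~\ref{Harris} as an abstract statement about Markov semigroups, taking as hypotheses only that $\rr$ is distance-like, $V$ is a Lyapunov function with \eqref{YPP}, and that for some $t^*>0$ the sublevel set $\{V\le 4K\}$ is $\rr$-small while $\rr$ is contractive for $P_{t^*}$. The conclusion is a strict contraction in the weighted Wasserstein distance $\mathbb W_{\rr_V}$ after some fixed time $t$. Throughout I will work with the single time step $P_{t^*}$ and then choose $t$ to be a suitable multiple of $t^*$.

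The first step is to record the two basic effects separately. From the Lyapunov bound \eqref{YPP} applied with $t=t^*$, iterating gives control of $P_{nt^*}V$; in particular, by choosing $n$ large enough that $K\e^{-\gg n t^*}\le \tfrac14$, one gets $P_{nt^*}V(\xi)\le \tfrac14 V(\xi)+K'$ for a constant $K'$ depending on $K$. This is the mechanism that pushes mass into the sublevel set $\{V\le 4K\}$: outside this set the weight $1+V$ contracts geometrically. The second step is to exploit smallness and contractivity of $\rr$ on that single step $P_{t^*}$. For two points $\xi,\eta$ I would split into two regimes according to whether $\rr(\xi,\eta)<1$ or $\rr(\xi,\eta)=1$ (equivalently $\|\xi-\eta\|$ large). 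When $\rr(\xi,\eta)<1$, contractivity gives $\mathbb W_\rr(P_{t^*}(\xi,\cdot),P_{t^*}(\eta,\cdot))\le \vv\,\rr(\xi,\eta)$ directly; when $\rr(\xi,\eta)=1$ and both points lie in $\{V\le 4K\}$, smallness gives $\mathbb W_\rr(\cdots)\le 1-\vv'$. The goal of these two steps is to establish a one-step inequality for $\mathbb W_{\rr_V}$ of the form $\mathbb W_{\rr_V}(\mu P_{t^*},\nu P_{t^*})\le C_1\,\mathbb W_{\rr_V}(\mu,\nu)$ where $C_1$ combines the contraction factor from $\rr$ with the possible growth of the weight $1+V$ under one step.

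The third step, which is the technical heart, is to interpolate the distance $\rr$ with the Lyapunov weight to produce an honest contraction in $\rr_V$. The key trick from \cite{HMS11} is to introduce a tunable parameter: replace $\rr$ by $\rr_\beta:=\sqrt{\beta}\,\rr$ or, more precisely, to work with the one-parameter family of distance-like functions $\tilde\rr_\beta(\xi,\eta)=\rr(\xi,\eta)\wedge \bigl(\beta^{-1}\rr(\xi,\eta)(V(\xi)+V(\eta))\bigr)$ type constructions, and to show that for an appropriately small $\beta$ the semigroup $P_{t^*}$ contracts $\mathbb W_{\tilde\rr_\beta}$ by a factor strictly less than one. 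The point is that $\rr_V=\sqrt{\rr(1+V(\xi)+V(\eta))}$ is comparable, up to constants depending on $\beta$, to a mixture of $\rr$ (which contracts by smallness/contractivity) and $\rr\cdot V$ (which contracts by the Lyapunov estimate provided $\beta$ is chosen so the geometric decay of $V$ dominates the growth of $\rr$). One balances the two by choosing $\beta$ so that the worse of the two contraction constants is still below $1$, and then verifies that the resulting contraction in $\tilde\rr_\beta$ transfers to a contraction in $\rr_V$ with constant $\tfrac12$ after passing, if necessary, to a further integer multiple of $t^*$.

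The main obstacle will be precisely this interpolation: neither $\rr$ alone nor $V$ alone contracts in the combined metric $\rr_V$, and one must couple the two estimates through the free parameter $\beta$ so that on the region $\{V\le 4K\}$ smallness dominates while on $\{V>4K\}$ the Lyapunov decay dominates, with the crossover handled uniformly. Making this quantitative requires careful bookkeeping of the constants $\vv$, $K$, $\gg$, $t^*$ and showing that the final contraction factor can be forced below $\tfrac12$; this is exactly the content of the proof of \cite[Theorem~4.8]{HMS11}, so I would follow that argument, verifying that each estimate remains valid in our Polish-space setting with $\rr$ only assumed lower semicontinuous rather than continuous. Since the statement is quoted verbatim from \cite{HMS11}, the cleanest route is to reproduce their weighted-Wasserstein contraction argument, and I would present the proof as a transcription of that argument with the parameter $\beta$ made explicit.
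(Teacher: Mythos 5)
The paper gives no proof of this theorem at all --- it is quoted verbatim as \cite[Theorem 4.8]{HMS11} --- and your proposal likewise defers to, and accurately sketches, that very argument (geometric decay of the Lyapunov weight off $\{V\le 4K\}$, smallness and contractivity on the single step $P_{t^*}$, and interpolation through a tunable parameter $\beta$ in a weighted metric), so you are taking essentially the same route as the paper. The one cosmetic slip is that the auxiliary metric in \cite{HMS11} is $\sqrt{\rr(\xi,\eta)\,(1+\beta V(\xi)+\beta V(\eta))}$ rather than the minimum-type construction you display, but since you hedge this and the balancing mechanism you describe (smallness dominating on the sublevel set, Lyapunov decay dominating off it, with the final factor pushed below $\tfrac12$ by passing to a multiple of $t^*$) is exactly the correct one, the plan is sound.
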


To apply this result to the present model,   for any $\dd>0$  and
$R>0$, let
\begin{equation*}
 \rr_{r,\dd} = 1\land
(\dd^{-1}\rr_r), ~~ ~~B_R=\{\xi\in\C_r:\ \|\xi\|_r\le R\},
\end{equation*}
\beq\label{**R} t_{R,\dd}:= 1+\ff 1 {2r}
\log\bigg(\ff{3}{2\dd^2}\Big(\ff{2\e^{2r}}{(1-\aa)^2}\Big(2R+\ff\dd
3\Big)^2+\ff{R^2}{1-\aa}\Big)\bigg),
\end{equation}
where $\rr_r$ was given in \eqref{eq6}. Obviously,  the metric
$\rr_{r,\dd}$ is equivalent to  $\rr_r$.
  To check the
conditions in Theorem \ref{Harris}   for the present setup, we need
to prepare the following four   lemmas concerned, respectively, with
the (local) irreducibility, the continuity with respect to the
initial variable, $\rho_{r,\dd}$-small property, and
$\rho_{r,\dd}$-contractive property for the Markov transition
kernel.

\begin{lem}\label{le1} Under the conditions of Theorem $\ref{er}$, for any
$R,\dd>0$,
\begin{equation}\label{a9}
\inf_{\xi\in B_R}\P(X_t^\xi\in B_\dd)>0,\ \ t\ge t_{R,\dd}.
\end{equation}
\end{lem}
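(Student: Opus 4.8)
The plan is to prove this local irreducibility estimate by first reducing it, via the neutral-type inequality \eqref{RPPb}, to a statement purely about the auxiliary process $\LL^X(s)=X^\xi(s)-G(X_s^\xi)$, and then steering that process into a neighbourhood of the origin with positive probability using the nondegeneracy of the noise.

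First I would apply \eqref{RPPb} to $f=X^\xi$ (so that $f_0=\xi$), giving
\[
\e^{2rt}\|X_t^\xi\|_r^2\le \ff{\|\xi\|_r^2}{1-\aa}+\ff1{(1-\aa)^2}\sup_{0\le s\le t}\big(\e^{2rs}|\LL^X(s)|^2\big).
\]
Hence, since $\|\xi\|_r\le R$ on $B_R$, the event $\{X_t^\xi\in B_\dd\}$ contains the event $\{\sup_{0\le s\le t}(\e^{2rs}|\LL^X(s)|^2)\le (1-\aa)^2\dd^2\e^{2rt}-(1-\aa)R^2\}$. Using $|\LL^X(0)|=|\xi(0)-G(\xi)|\le(1+\aa)\|\xi\|_r\le 2R$, and recalling from \eqref{eq1} that $\LL^X(s)=\LL^X(0)+\int_0^s b(X_u^\xi)\,\d u+\int_0^s\si(X_u^\xi)\,\d W(u)$, the task reduces to showing that, uniformly over $\xi\in B_R$, with positive probability $\LL^X$ stays inside a fixed tube whose terminal width is small. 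The threshold time $t_{R,\dd}$ in \eqref{**R} is precisely the time after which the allowed budget $(1-\aa)^2\dd^2\e^{2rt}$ exceeds the contributions from the initial displacement of size $2R$ and the terminal target $\dd/3$, with the factor $3$ entering through $|\LL^X(0)+\int b+\int\si\,\d W|^2\le 3(\cdots)$.

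The key mechanism is then to steer $\LL^X$ toward the origin. Since by (A2) $\si(\xi)$ is invertible with $\si$ and $\si^{-1}$ uniformly bounded, I would introduce, for the stopping time $\tau_N=\inf\{s:\|X_s^\xi\|_r\ge N\}$, the adapted control $h(s)=-\si(X_s^\xi)^{-1}\big(b(X_s^\xi)+\kk\LL^X(s)\big)\mathbf{1}_{\{s<\tau_N\}}$ for a suitable $\kk>0$, and perform a Girsanov change of measure $\d\tilde\P=\EE_t\,\d\P$ with $\EE_t=\exp(\int_0^t\<h(s),\d W(s)\>-\ff12\int_0^t|h(s)|^2\,\d s)$. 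Under $\tilde\P$ the process $\tilde W(s)=W(s)-\int_0^s h(u)\,\d u$ is a Brownian motion, and up to $\tau_N$ the process $\LL^X$ solves the dissipative equation $\d\LL^X(s)=-\kk\LL^X(s)\,\d s+\si(X_s^\xi)\,\d\tilde W(s)$; hence $\e^{rs}|\LL^X(s)|$ is, with $\tilde\P$-positive probability, kept inside the required tube by a standard small-ball estimate for a nondegenerate diffusion with bounded coefficients (equivalently, a Stroock--Varadhan support argument). Because $h$ is bounded on $[0,\tau_N]$, $\EE_t$ is a genuine martingale, so an event of positive $\tilde\P$-probability has positive $\P$-probability, with a lower bound depending only on $R,\dd,N$ and the uniform constants in (A0) and (A2).

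The main obstacle is that the drift $b(X_s^\xi)$, and hence the steering control $h$, depend on the \emph{entire} infinite history $X_s^\xi$ and are only bounded on bounded subsets of $\C_r$ by (A0), so a priori $h$ need not have finite energy. This is why the localization at $\tau_N$ is essential: on $\{\tau_N>t\}$ all coefficients are controlled, while Theorem \ref{ex} (or (A3)) gives $\E\|X_t^\xi\|_r^2\le C(1+R^2)\e^{Ct}$, whence $\inf_{\xi\in B_R}\P(\tau_N>t)>0$ for $N$ large; combining this with the Girsanov lower bound on $\{\tau_N>t\}$ produces a positive lower bound that is uniform over $\xi\in B_R$. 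The second delicate point is quantitative: the exponential weight $\e^{2rs}$ in $\|\cdot\|_r$ forces $\LL^X$ to decay faster than $\e^{-rs}$ rather than merely to be small at the terminal time, which is exactly what a dissipation rate $\kk>r$ supplies and what pins down the explicit form of $t_{R,\dd}$.
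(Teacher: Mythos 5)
Your overall strategy is viable, but it is genuinely different from the paper's, and two steps in your sketch need repair. The paper uses no Girsanov transform at all: it chooses a \emph{deterministic} bridge $h\in C_b^\infty$ with $h(0)=\xi(0)-G(\xi)-\ff{\dd(1-\aa)}{3}(1,0,\cdots,0)$, $|h|\le|h(0)|$ and $h\equiv 0$ for $s\ge1$, forms the scalar process $D(s)=|\LL^{X}(s)-h(s)|^2-\ff{\dd^2(1-\aa)^2}{9}$, and notes that up to the first time $\e^{2rs}|D(s)|$ reaches $\ff{\dd^2(1-\aa)^2}{18}$ the quantity $|\LL^X(s)-h(s)|^2$ stays above $\ff{\dd^2(1-\aa)^2}{18}$, so by ({\bf A2}) the quadratic variation of $\e^{2rs}D(s)$ has density bounded between two positive constants; the one-dimensional small-ball estimate \cite[Lemma I.8.3]{Bass} then keeps $\LL^X$ in the required tube with positive probability, and \eqref{RPPb} converts this into $X_t^\xi\in B_\dd$ precisely for $t\ge t_{R,\dd}$. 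Your opening reduction via \eqref{RPPb} and your reading of the threshold \eqref{**R} agree with the paper; what you do differently is to create the tube event by a measure change with the feedback control $-\si^{-1}(b+\kk\LL^X)$, turning $\LL^X$ into an elliptic dissipative process under the new measure. That is a legitimate, more modular route (and the transfer back to $\P$ via $\P(A)\ge\tilde\P(A)^2/\E[\EE_t^2]$ does give a bound uniform over $\xi\in B_R$), but it does not save the key technical ingredient: for a path-dependent, merely adapted diffusion coefficient $\si(X_s^\xi)$ the ``standard small-ball estimate'' you invoke is exactly the radial/time-change lemma the paper uses, and one must still handle the degeneracy of the quadratic variation of $|\LL^X|^2$ at the origin --- this is what the paper's offset $\ff{\dd(1-\aa)}{3}(1,0,\cdots,0)$ in $h(0)$ is engineered for, and your sketch is silent on it.

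The second soft spot is logical: ``$\inf_{\xi\in B_R}\P(\tau_N>t)>0$ for $N$ large, combined with the Girsanov lower bound on $\{\tau_N>t\}$'' is not a valid inference, since two events of positive probability need not intersect. The correct fix is available within your own setup: on the tube event one has, by \eqref{RPPb}, $\sup_{0\le s\le t}\|X_s^\xi\|_r^2\le \ff{R^2}{1-\aa}+\dd^2\e^{2rt}$, so for $N$ exceeding this constant the tube event is \emph{contained} in $\{\tau_N>t\}$ and the localization is automatically in force on the event you are estimating; no separate probability bound for $\tau_N$ is needed (and the appeal to Theorem \ref{ex} or ({\bf A3}) for this purpose can be dropped). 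With these two repairs your argument closes; what the paper's route buys in exchange is the complete avoidance of the change of measure and of the localization required to justify it.
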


\begin{proof}
The crucial  point of the proof is to apply a standard result (e.g.,
\cite[Lemma I.8.3]{Bass})  that a uniform elliptic diffusion process
is irreducible. So, below we will compare the radial process
$|X^\xi|(s)$ with an elliptic diffusion process. For any
  $\xi\in B_R$ and $\dd>0$, let $h\in C_b^\infty(\R_+;\R^d)$
such that \beq\label{HH}  h(0)= \xi(0)-G(\xi)- \ff {\dd(1-\aa)} {3}
(1,0,\cdots,0),\ \ |h|\le |h(0)|,\ \  \text{and}\  h(s)=0 \
\text{for\ } s\ge 1,\end{equation} where $\aa\in(0,1)$ was
introduced in ({\bf H1}).  According to \eqref{RPPb}, we have
\begin{equation}\label{eq5}
\begin{split}(1-\aa)^2
\sup_{0\le u\le
s}(\e^{2ru}|X^\xi(u)|^2)
\le (1-\aa)\|\xi\|^2_r+ \sup_{0\le u\le s}
(\e^{2ru}|\Lambda^{X^\xi}(u)|^2),~~~~s\ge0,
\end{split}
\end{equation}
in which $\Lambda^{X^\xi}$  was defined as in \eqref{eq4} with
$f=X^\xi$. Consider the following  radial process
\begin{equation}\label{ABC}
D(s):=|\Lambda^{X^\xi}(s)-h(s)|^2-\ff{\dd^2(1-\aa)^2}{9},~~~s\ge0.
\end{equation}
By  It\^o's formula, it follows that
\begin{equation}\label{DS}
\begin{split}
\d(\e^{2rs}D(s))&=2\,r\e^{2rs}D(s)\d s+ \e^{2rs}\d D(s)\\
&=\e^{2rs}\big\{2\,rD(s)+2\<\Lambda^{X^\xi}(s)-h(s),b(X_s^\xi)-h'(s)\>+\|\si(X_s^\xi)\|_{\rm
HS}^2\big\}\d
s\\
&\quad+2\,\e^{2rs}\<\Lambda^{X^\xi}(s)-h(s),\si(X_s^\xi)\d
W(s)\>,~~~~s\ge0.
\end{split}
\end{equation}
Define the stopping time
\begin{equation}\label{ABO}
\tau=\inf\Big\{s\ge0:\e^{2rs}|D(s)|\ge
\ff{\dd^2(1-\aa)^2}{18}\Big\}.
\end{equation}
Since $D(0)=0$ and $D(s)$ is continuous with respect to $s$, we have
$\P(\tau>0)=1.$ In terms \eqref{ABC} and \eqref{ABO}, we therefore
have
\begin{equation*}
\ff{\dd^2(1-\aa)^2}{18}\ge\e^{2rs}|D(s)|\ge|D(s)|\ge\ff{\dd^2(1-\aa)^2}{9}-|\Lambda^{X^\xi}(s)-h(s)|^2,\
\ s\in [0,\tau].
\end{equation*} As a consequence, we arrive at
\begin{equation}\label{b7}
|\Lambda^{X^\xi}(s)-h(s)|^2\ge\ff{\dd^2(1-\aa)^2}{18},~~~~~s\in[0,\tau].
\end{equation} Combining \eqref{b7}  with   ({\bf A2}),  we
obtain from \eqref{DS} that
\begin{equation}\label{t1}
\ff{\d}{\d s}\<\e^{2rs}
D(s)\>=4\,\e^{4rs}|\si^*(X_s^\xi)(\Lambda^{X^\xi}(s)-h(s))|^2\in
[c_1,c_2],\ \ s\in [0,\tau\land t]
\end{equation}
for some constants $c_2>c_1>0.$ Herein $\<\cdot\>$ means the
quadratic variation of a continuous semi-martingale and  $t\ge
t_{R,\dd}$, which is to be fixed in what follows. Next, we are going
to claim that \eqref{t1} implies that
\begin{equation}\label{v4}
\P\Big(\sup_{0\le u\le s}(\e^{2ru}|D(u)|)<
\ff{\dd^2(1-\aa)^2}{18}\Big)>0,\ \ s\ge 0.
\end{equation}
To achieve  \eqref{v4}, we extend $(\e^{2rs}D(s))_{s\in [0, \tau]}$
into $ (\e^{2rs}D(s))_{s\ge0}$  in the following manner
\begin{equation}\label{ABD}Y(s):= \e^{2r(s\land\tau)}D(s\land\tau) + {\bf1}_{\{s>\tau\}}
(W^1(s)-W^1(\tau)),\ \ s\ge 0,\end{equation} where
$(W^1(s))_{s\ge0}$ stands for the first component of
$(W(s))_{s\ge0}$. Consequently, \eqref{t1} gives that
$$\ff{\d}{\d s} \<Y(s)\>\in [c_1\land 1, c_2\lor 1].$$
By \cite[Lemma I.8.3]{Bass} and using $Y(0)=0$, this yields
\begin{equation}\label{eq7}
 \P\Big(\sup_{0\le u\le s}|Y(u)|< c \Big)>0,~~~s\,c>0.
\end{equation}Combining this with \eqref{ABO} and \eqref{ABD}, we obtain \beg{align*}
\P\Big(\sup_{0\le u\le s}(\e^{2ru}|D(u)|)<
\ff{\dd^2(1-\aa)^2}{18}\Big)&= \P\Big(\sup_{0\le u\le
s\land\tau}(\e^{2ru}|D(u)|)<
 \ff{ \dd^2(1-\aa)^2}{18}, s<\tau\Big)\\
&=\P\Big(\sup_{0\le u\le s}|Y(u)|< \ff{\dd^2(1-\aa)^2}{18}\Big) >0.
\end{align*}
So, \eqref{v4} holds true.

 By using the fundamental  inequality: $\ff 12 |u|^2 -|v|^2 \le
|u-v|^2, u,v\in\R^d,$ and recalling that $h(s)=0$ for $s\ge 1$, we
deduce from \eqref{eq5} and $\xi\in B_R$ that
\begin{equation}\label{v1}
\begin{split}
 &\P\Big(\sup_{0\le s\le t}(\e^{2rs}|D(s)|)\le \ff{\dd^2(1-\aa)^2}{18}\Big)\\
 &\le \P\Big(\sup_{0\le s\le t}(\e^{2rs} |\Lambda^{X^\xi}(s)-h(s)|^2)\le
\ff{\dd^2(1-\aa)^2}{6}\e^{2rt}\Big)\\
 &\le \P\Big(\sup_{0\le s\le t}\Big( \ff{1}{2}\e^{2rs}|\Lambda^{X^\xi}(s)|^2-\e^{2rs}|h(s)|^2\Big)\le \ff{\dd^2(1-\aa)^2}{6}\e^{2rt}\Big)\\
 &\le  \P\Big(\sup_{0\le s\le t}(\e^{2rs} |\Lambda^{X^\xi}(s)|^2)\le 2\sup_{0\le s\le 1}(\e^{2rs}
 |h(s)|^2)+\ff{\dd^2(1-\aa)^2}{3}\e^{2rt}\Big)\\
 &\le \P\Big( \sup_{0\le s\le
t}(\e^{2rs}|X^\xi(s)|^2)\le  \ff{1}{
1-\aa}R^2+\ff{2}{(1-\aa)^2}\sup_{0\le s\le 1}(\e^{2rs}
 |h(s)|^2)+\ff{\dd^2}{3}\e^{2rt}\Big).
\end{split}
\end{equation}
On the other hand, we observe that
\begin{align*}
\P(X_t^\xi\in B_\dd)&=\P\Big(\e^{-2rt}\sup_{-\8<s\le
t}(\e^{2rs}|X^\xi(s)|^2)\le \dd^2\Big)\\
&=\P\Big(\|\xi\|_r^2\vee\sup_{0\le s\le
t}(\e^{2rs}|X^\xi(s)|^2)\le\e^{2rt} \dd^2\Big)\\
& \ge\P\Big(\sup_{0\le s\le t}(\e^{2rs}|X^\xi(s)|^2)\le\e^{2rt}
\dd^2\Big).
\end{align*} Combining this with \eqref{v4} and \eqref{v1},  it
follows  that
\begin{equation}\label{w1}
 \ff{1}{
1-\aa}R^2+\ff{2}{(1-\aa)^2}\sup_{0\le s\le 1}(\e^{2rs}
 |h(s)|^2)+\ff{\dd^2}{3}\e^{2rt}\le\e^{2rt}
\dd^2,\ \ t\ge t_{R,\dd}.
\end{equation}

So it remains to prove \eqref{w1}. By \eqref{HH}, ({\bf H1}),
 $G(\xi_0)={\bf0}$ and $\xi\in B_R$,  we infer that
$$|h|^2\le |h(0)|^2\le \Big(2R+\ff \dd 3\Big)^2, $$
which incurs
$$  \sup_{0\le s\le 1}(\e^{2rs}
 |h(s)|^2) \le \e^{2r}\Big(2R+\ff \dd 3\Big)^2.$$ Then \eqref{w1}
 holds definitely
 provided
$$\ff{2\dd^2} 3 \e^{2rt} \ge \ff{2\e^{2r}}{(1-\aa)^2}\Big(2R+\ff\dd 3\Big)^2+\ff{R^2}{1-\aa},$$
which indeed  is true for $t\ge t_{R,\dd}$.
\end{proof}

\begin{lem}\label{le2} Under conditions of Theorem $\ref{er}$, there exists a constant $K>0$ such that
\begin{equation}\label{s2}
\E\|X_t^\xi-X_t^\eta\|_r^2\le K\e^{K\,t}\,\,\|\xi-\eta\|_r^2,\ \
t\ge 0,\,\, \xi,\eta\in \C_r.
\end{equation}
\end{lem}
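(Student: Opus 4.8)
The plan is to track the difference of the two segment processes through the neutral variable $\LL^{X,Y}$ and to close the estimate using the contraction bound on $G$ from $({\bf A0})$ together with the monotonicity bound $({\bf A1})$. Write $X=X^\xi$, $Y=X^\eta$ and recall from \eqref{eq4} that $\LL^{X,Y}(t)=X(t)-Y(t)-(G(X_t)-G(Y_t))$, which satisfies
$$\d\LL^{X,Y}(t)=(b(X_t)-b(Y_t))\,\d t+(\si(X_t)-\si(Y_t))\,\d W(t).$$
Applying It\^o's formula to $\e^{2rt}|\LL^{X,Y}(t)|^2$ and noting that, upon taking $\xi=X_t$, $\eta=Y_t$ in $({\bf A1})$, the quantity $\xi(0)-\eta(0)+G(\eta)-G(\xi)$ is exactly $\LL^{X,Y}(t)$, the finite-variation part is controlled by
$$2\<\LL^{X,Y}(t),b(X_t)-b(Y_t)\>+\|\si(X_t)-\si(Y_t)\|_{\rm HS}^2\le 2L_0\|X_t-Y_t\|_r^2.$$
Hence $\d(\e^{2rt}|\LL^{X,Y}(t)|^2)\le 2r\,\e^{2rt}|\LL^{X,Y}(t)|^2\,\d t+2L_0\,\e^{2rt}\|X_t-Y_t\|_r^2\,\d t+\d M(t)$, where $M$ is the local martingale given by $\d M(t)=2\e^{2rt}\<\LL^{X,Y}(t),(\si(X_t)-\si(Y_t))\d W(t)\>$.

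Next I would establish the closure relation replacing \eqref{RPPa} in the case $f_0\neq g_0$. Splitting the supremum defining $\|X_t-Y_t\|_r$ into the frozen history part $\theta\le -t$, which contributes $\e^{-2rt}\|\xi-\eta\|_r^2$, and the forward part, and then inserting the elementary bound $|X(s)-Y(s)|^2\le \ff{|\LL^{X,Y}(s)|^2}{1-\aa}+\aa\|X_s-Y_s\|_r^2$ coming from $({\bf A0})$ exactly as in the derivation of \eqref{RPPa}, one absorbs the $\aa$-term and obtains
$$\e^{2rt}\|X_t-Y_t\|_r^2\le \ff 1{(1-\aa)^2}\,B(t)+\ff 1{1-\aa}\|\xi-\eta\|_r^2,\qquad B(t):=\sup_{0\le s\le t}\big(\e^{2rs}|\LL^{X,Y}(s)|^2\big).$$
This is the key step: it expresses the infinite-memory norm of the difference through the running supremum $B(t)$ of the neutral variable plus the initial datum.

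I would then combine the two displays. Integrating the It\^o inequality, taking $\sup_{0\le u\le t}$ and using the closure relation at each time $s$, together with $B(0)=|\LL^{X,Y}(0)|^2\le 2(1+\aa^2)\|\xi-\eta\|_r^2$, gives after taking expectations
$$\E B(t)\le C(1+t)\|\xi-\eta\|_r^2+C\int_0^t\E B(s)\,\d s+\E\Big[\sup_{0\le u\le t}|M(u)|\Big].$$
For the martingale term, since $\e^{4rs}|\LL^{X,Y}(s)|^2\le \e^{2rs}B(t)$, condition $({\bf A1})$ yields $\<M\>_t\le 4L_0\,B(t)\int_0^t\e^{2rs}\|X_s-Y_s\|_r^2\,\d s$, so by the Burkholder--Davis--Gundy inequality and Young's inequality $\E[\sup_{0\le u\le t}|M(u)|]\le \ff 12\E B(t)+C\int_0^t\E B(s)\,\d s+Ct\|\xi-\eta\|_r^2$. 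Absorbing $\ff 12\E B(t)$ into the left-hand side and applying Gronwall's lemma gives $\E B(t)\le K\e^{Kt}\|\xi-\eta\|_r^2$, and feeding this back into the closure relation proves \eqref{s2}.

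The main obstacle is the self-referential nature of the estimate forced by the infinite memory: the drift and diffusion differences are controlled by $\|X_s-Y_s\|_r^2$, which through the closure relation depends on the running supremum $B(s)$ rather than on the pointwise value $|\LL^{X,Y}(s)|^2$; consequently one cannot simply apply Gronwall to $\E|\LL^{X,Y}(t)|^2$ but must work at the level of $B(t)$, control the martingale by Burkholder--Davis--Gundy, and then absorb the resulting $\ff 12\E B(t)$. To legitimise this absorption one needs $\E B(t)<\8$ a priori; I would secure this by the standard localisation along the stopping times $\tau_n=\inf\{t\ge 0:|\LL^{X,Y}(t)|\ge n\}$, deriving the inequality for the stopped processes and passing to the limit by monotone convergence.
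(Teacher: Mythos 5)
Your proposal is correct and follows essentially the same route as the paper: the closure relation you derive is exactly the paper's inequality \eqref{RPP4}, and the rest of your argument (It\^o's formula for $\e^{2rt}|\LL^{\xi,\eta}(t)|^2$ with $({\bf A1})$, BDG plus Young's inequality to absorb half of the running supremum, then Gronwall) is precisely the paper's proof, with your explicit localisation step merely making the absorption rigorous where the paper leaves it implicit.
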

\begin{proof} Let $\LL^{\xi,\eta}(t)=\LL^{X^\xi,X^\eta}(t)$ be defined by \eqref{eq4} with $f=X^\xi$ and $g=X^\eta$. Then, as in \eqref{RPPa} we have
\beq\label{RPP4} \e^{2rt}\|X_t^\xi-X_t^\eta\|_r^2\le  \ff 1 { 1-\aa
}\|\xi-\eta\|_r^2+\ff 1 {(1-\aa)^2}\sup_{0\le s\le t}(\e^{2r s}
 |\LL^{\xi,\eta}(s) |^2).\end{equation} Thus, to obtain the desired assertion \eqref{s2},    it is sufficient to
 show that
\beq\label{s2'}\GG(t):=\E\Big(\sup_{0\le s\le
t}(\e^{2rs}|\LL^{\xi,\eta}(s)|^2)\Big)\le J\e^{Jt}
\|\xi-\eta\|_r^2,\ \ t\ge 0,\ \xi,\eta\in \C_r\end{equation} for
some constant $J>0$. Applying It\^o's formula and using ({\bf A1})
and ({\bf A2}), we obtain
\begin{equation*}
\begin{split}
\e^{2rt}|\LL^{\xi,\eta}(t)|^2 &\le 4\|\xi-\eta\|_r^2+\int_0^t
\e^{2rs}\{2r|\LL^{\xi,\eta}(s)|^2+L_0\|X_s^\xi-X_s^\eta\|_r^2\}\d
s\\
&\quad+2\,\int_0^t\e^{2rs}\<\LL^{\xi,\eta}(s),(\si(X_s^\xi)-\si(X_s^\eta))\d
W(s)\>. \end{split} \end{equation*} Combining this with
\eqref{RPP4}, ({\bf A2}) and  BDG's inequality, we find out
constants $c_1,c_2,c_3>0$ such that
 \begin{align*}
\GG(t) &\le
4\|\xi-\eta\|_r^2+c_1\int_0^t\{\|\xi-\eta\|_r^2+\GG(s)\}\d
s\\
 &\quad+\E\Big(\sup_{0\le s\le t}(\e^{2rs}|\LL^{\xi,\eta}(s)|^2)\int_0^t\e^{2rs}\|\si(X_s^\xi)-\si(X_s^\eta)\|_{\rm HS}^2\d  s \Big)^{1/2}\\
 &\le 4\|\xi-\eta\|_r^2+c_1\int_0^t\{\|\xi-\eta\|_r^2+\GG(s)\}\d
s+\ff{1}{2}\GG(t)+c_2\int_0^t\E(\e^{2rs}\|X_s^\xi-X_s^\eta\|_r^2)\d  s \\
&\le\ff{1}{2}\GG(t)+c_3(1+t)\|\xi-\eta\|_r^2+c_3\int_0^t\GG(s)\d s.
\end{align*}
Consequently,
\begin{align*}
\GG(t)\le 2c_3(1+t)\|\xi-\eta\|_r^2+2c_3\int_0^t\GG(s)\d s.
\end{align*}
By Gronwall's inequality, we obtain
\begin{align*}
\GG(t)\le 2c_3(1+t)\e^{2c_3t}\|\xi-\eta\|_r^2.
\end{align*}
Therefore, \eqref{s2'} follows from  \eqref{RPP4} immediately.
\end{proof}

\begin{lem}\label{le3} Under conditions of Theorem $\ref{er}$, for any $R,\dd>0$,
\begin{equation}\label{eq8}
\mathbb {W}_{\rr_{r,\dd}}( {P}_t(\xi,\cdot), {P}_t(\eta,\cdot))\le
1-\ff{\aa^2_t}{2} <1,\ \ t\ge t_{R,\dd/4}, \ \xi,\eta\in B_R,
\end{equation}
holds for  $t_{R,\dd/4}$ in \eqref{**R} and $ \aa_t:= \inf_{\xi\in
B_R} \P(X_t^\xi\in B_{\dd/4}).$
\end{lem}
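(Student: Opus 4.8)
The plan is to bound $\mathbb{W}_{\rr_{r,\dd}}$ from above by evaluating the transport cost on the single simplest admissible coupling, namely the independent (product) one. Concretely, I would let $\tt X$ and $\tt Y$ be independent random variables with laws $P_t(\xi,\cdot)$ and $P_t(\eta,\cdot)$. Since $\mathbb{W}_{\rr_{r,\dd}}$ is defined as the infimum of $\int \rr_{r,\dd}\,\d\pi$ over all couplings $\pi$, the product law is one admissible choice, and hence
$$\mathbb{W}_{\rr_{r,\dd}}(P_t(\xi,\cdot),P_t(\eta,\cdot))\le \E\,\rr_{r,\dd}(\tt X,\tt Y).$$
I would then split this expectation over the event $A:=\{\tt X\in B_{\dd/4}\}\cap\{\tt Y\in B_{\dd/4}\}$ and its complement, using that $\rr_{r,\dd}\le 1$ identically.

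The key geometric point is that the event $A$ forces the rescaled distance strictly below $\ff12$. Indeed, on $A$ the triangle inequality gives $\|\tt X-\tt Y\|_r\le\|\tt X\|_r+\|\tt Y\|_r\le\ff\dd2$, so that $\rr_r(\tt X,\tt Y)\le\|\tt X-\tt Y\|_r\le\ff\dd2$ and therefore $\rr_{r,\dd}(\tt X,\tt Y)=1\land(\dd^{-1}\rr_r(\tt X,\tt Y))\le\ff12$. This is precisely why the radius $\dd/4$ rather than $\dd/2$ appears in the statement: halving the threshold twice buys the strict contraction factor $\ff12<1$. Bounding $\rr_{r,\dd}\le 1$ off $A$ and combining the two regimes yields
$$\E\,\rr_{r,\dd}(\tt X,\tt Y)\le \ff12\,\P(A)+\big(1-\P(A)\big)=1-\ff12\,\P(A).$$

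It then remains only to bound $\P(A)$ from below, and this is where the heavy lifting of the previous lemma is invoked. By independence of $\tt X$ and $\tt Y$ together with the definition of $\aa_t$ and $\xi,\eta\in B_R$,
$$\P(A)=\P(X_t^\xi\in B_{\dd/4})\,\P(X_t^\eta\in B_{\dd/4})\ge\aa_t^2,$$
which gives the asserted bound $1-\aa_t^2/2$. To confirm this is genuinely less than $1$, I would apply Lemma \ref{le1} with $\dd$ replaced by $\dd/4$: for $t\ge t_{R,\dd/4}$ the estimate \eqref{a9} guarantees $\inf_{\xi\in B_R}\P(X_t^\xi\in B_{\dd/4})>0$, so $\aa_t>0$ on this range. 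I do not expect a serious obstacle here, since the entire difficulty has already been absorbed into the irreducibility estimate of Lemma \ref{le1}; the present lemma is a short repackaging of it through the product coupling, the only subtlety being the correct choice of ball radius that produces the strict contraction constant.
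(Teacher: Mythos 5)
Your proposal is correct and coincides with the paper's own argument: both use the independent (product) coupling of the two functional solutions, split the transport cost over the event that both segments land in $B_{\dd/4}$ (where the triangle inequality forces $\rr_{r,\dd}\le\ff12$) and its complement (where $\rr_{r,\dd}\le 1$), and then lower-bound the probability of that event by $\aa_t^2$ via independence and Lemma \ref{le1}. No differences worth noting.
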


\begin{proof}
For any $\xi,\eta\in B_R$, let $(X_t^\xi)_{t\ge0}$ be the functional
solution to  \eqref{eq1} with the initial value
$X_0^\xi=\xi\in\C_r$, and   $(\tt X_t^\eta)_{t\ge0}$  the functional
solution to \eqref{eq1} with the initial datum  $\tt X_0^\eta=\eta$
but for an independent Brownian motion $(\tt W(t))_{t\ge0}$
replacing $(W(t))_{t\ge0}$. We call $(X_t^\xi, \tt X_t^\eta)$ an
independent coupling of the functional solutions to \eqref{eq1}. In
view of the independence of $(X_t^\xi)_{t\ge0}$ and $(\tt
X_t^\eta)_{t\ge0}$, we deduce that
\begin{equation*}
\begin{split}
\mathbb {W}_{\rr_{r,\dd}}( {P}_t(\xi,\cdot), {P}_t(\eta,\cdot))
& \le \E(1\wedge(\dd^{-1}\|X_t^\xi-X_t^\eta\|_r)) \\
& \le \ff{1}{2}\P(X_t^\xi\in B_{\dd/4},X_t^\eta\in B_{\dd/4})
 +\P(\{X_t^\xi\notin B_{\dd/4}\} \cup \{X_t^\eta\notin
B_{\dd/4}\})\\
&=1-\ff{1}{2}\P(X_t^\xi\in B_{\dd/4})\P(X_t^\eta\in B_{\dd/4}) \\
&\le 1-\ff{\aa^2_t}{2}.
\end{split}
\end{equation*}
Hence,  \eqref{eq8} holds true due to  Lemma \ref{le1}.
\end{proof}

 \begin{lem}\label{le4} Under the conditions of Theorem $\ref{er}$,   for any $\bb\in (0,1)$ there exist  constants $\dd_\bb, t_\bb>0$ such that
\begin{equation}\label{ap7}
\mathbb {W}_{\rr_{r,\dd}}( {P}_t(\xi,\cdot), {P}_t(\eta,\cdot))\le
\bb\,\rr_{r,\dd}(\xi,\eta),~~~~~t\ge t_\bb, \ \dd\in (0,\dd_\bb]
\end{equation}
for any $\xi,\eta\in\C_r$ with $\rr_{r,\dd}(\xi,\eta)<1$.

\end{lem}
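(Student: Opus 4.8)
The plan is to represent $\mathbb W_{\rr_{r,\dd}}(P_t(\xi,\cdot),P_t(\eta,\cdot))$ through an asymptotic coupling built by change of measure, using the non-degeneracy $(\mathbf{A2})$ to steer the two functional solutions together while the monotonicity $(\mathbf{A1})$ absorbs the drift and diffusion differences. Fix $\xi,\eta\in\C_r$ with $\rr_{r,\dd}(\xi,\eta)<1$; for $\dd<1$ this forces $\|\xi-\eta\|_r<\dd$, so that $\rr_{r,\dd}(\xi,\eta)=\dd^{-1}\|\xi-\eta\|_r$. Let $(X(t))_{t\ge0}$ solve \eqref{eq1} with $X_0=\xi$ and Brownian motion $W$, and let $(Y(t))_{t\ge0}$ solve the controlled equation $\d\{Y(t)-G(Y_t)\}=b(Y_t)\,\d t+\si(Y_t)(\d W(t)+v(t)\,\d t)$ with $Y_0=\eta$, where $v(t)=\theta\,\si(Y_t)^{-1}\LL^{X,Y}(t)$ for a large constant $\theta>0$ and $\LL^{X,Y}(t)=\LL^{X}(t)-\LL^{Y}(t)$. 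By $(\mathbf{A2})$ the control $v$ is well defined and $|v(t)|\le\theta\,\|\si(Y_t)^{-1}\|\,|\LL^{X,Y}(t)|$, so the added drift is exactly $\theta\LL^{X,Y}(t)\,\d t$.

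\textbf{Step 1 (decay of the coupled difference).} Since $\LL^{X,Y}(t)=X(t)-Y(t)+G(Y_t)-G(X_t)$, It\^o's formula gives
\begin{equation}\label{plan1}
\d|\LL^{X,Y}(t)|^2=\big\{2\<\LL^{X,Y}(t),b(X_t)-b(Y_t)\>-2\theta|\LL^{X,Y}(t)|^2+\|\si(X_t)-\si(Y_t)\|_{\rm HS}^2\big\}\d t+\d M(t)
\end{equation}
for a local martingale $M$. Identifying $(\xi,\eta)$ in $(\mathbf{A1})$ with $(X_t,Y_t)$, the vector $\xi(0)-\eta(0)+G(\eta)-G(\xi)$ equals $\LL^{X,Y}(t)$, whence $2\<\LL^{X,Y}(t),b(X_t)-b(Y_t)\>+\|\si(X_t)-\si(Y_t)\|_{\rm HS}^2\le 2L_0\|X_t-Y_t\|_r^2$; crucially, no Lipschitz bound on $b$ is needed. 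Multiplying \eqref{plan1} by $\e^{2rt}$, taking expectations, and feeding in \eqref{RPP4} (with $f=X$, $g=Y$) to bound $\e^{2rs}\|X_s-Y_s\|_r^2$ by $\ff1{1-\aa}\|\xi-\eta\|_r^2$ plus a multiple of $\sup_{u\le s}(\e^{2ru}|\LL^{X,Y}(u)|^2)$, I set $\GG(t):=\E\sup_{0\le s\le t}(\e^{2rs}|\LL^{X,Y}(s)|^2)$ and close a Gronwall inequality whose sign is governed by $2\theta-2r$ against the feedback constant $2L_0(1-\aa)^{-2}$. Taking $\theta$ large enough that the net coefficient is negative yields $\GG(t)\le C\|\xi-\eta\|_r^2$ uniformly in $t$, and hence
\begin{equation}\label{plan2}
\E\|X_t-Y_t\|_r^2\le C\,\e^{-2rt}\|\xi-\eta\|_r^2,\qquad \E\int_0^\8|\LL^{X,Y}(s)|^2\,\d s\le C\|\xi-\eta\|_r^2.
\end{equation}

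\textbf{Step 2 (change of measure and assembly).} Set $\tt W(t):=W(t)+\int_0^t v(s)\,\d s$ and $R_t:=\exp\big(-\int_0^t\<v(s),\d W(s)\>-\ff12\int_0^t|v(s)|^2\,\d s\big)$. By $(\mathbf{A2})$ and the second bound in \eqref{plan2}, the cost $\E\int_0^\8|v|^2\,\d s\le C\|\xi-\eta\|_r^2$ is small, so Novikov's condition holds and under $\Q:=R_t\,\P$ the process $\tt W$ is a Brownian motion for which $Y$ solves \eqref{eq1} with datum $\eta$; thus $\mathrm{Law}_\Q(Y_t)=P_t(\eta,\cdot)$ while $\mathrm{Law}_\P(X_t)=P_t(\xi,\cdot)$. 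Since $(X_t,Y_t)$ under $\P$ couples $P_t(\xi,\cdot)$ with $\mathrm{Law}_\P(Y_t)$, and $\rr_{r,\dd}\le1$ forces $\mathbb W_{\rr_{r,\dd}}\le\|\cdot\|_{\rm TV}$, the triangle inequality gives
\begin{equation}\label{plan3}
\mathbb W_{\rr_{r,\dd}}(P_t(\xi,\cdot),P_t(\eta,\cdot))\le\E_\P\,\rr_{r,\dd}(X_t,Y_t)+\|\mathrm{Law}_\P(Y_t)-\mathrm{Law}_\Q(Y_t)\|_{\rm TV}.
\end{equation}
The first term is at most $\dd^{-1}\E_\P\|X_t-Y_t\|_r\le\dd^{-1}\sqrt{C}\,\e^{-rt}\|\xi-\eta\|_r$ by \eqref{plan2}, which is $\le\ff\bb2\,\dd^{-1}\|\xi-\eta\|_r$ once $t\ge t_\bb$ with $\sqrt{C}\,\e^{-rt_\bb}\le\bb/2$. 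The second term is controlled by Pinsker's inequality through the relative entropy $\ff12\E_\Q\int_0^t|v|^2\,\d s\le C\|\xi-\eta\|_r^2$, hence is $\le C'\|\xi-\eta\|_r$, and this is $\le\ff\bb2\,\dd^{-1}\|\xi-\eta\|_r$ as soon as $\dd\le\dd_\bb:=\bb/(2C')$. Substituting both bounds into \eqref{plan3} yields \eqref{ap7}.

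The main obstacle is Step 1: closing the exponential-decay estimate for the coupled difference in the infinite-memory norm $\|\cdot\|_r$. Because $\|\cdot\|_r$ is a weighted path supremum rather than a pointwise functional, one cannot differentiate it directly, and the reduction \eqref{RPP4} that transfers the estimate onto the neutral difference $\LL^{X,Y}$ reintroduces a positive feedback of size $L_0(1-\aa)^{-2}$. The coupling strength $\theta$ must therefore be chosen large enough to dominate both this feedback and the factor $2r$ coming from the weight $\e^{2rt}$, and one has to check that a single $\theta$ simultaneously delivers the uniform bound on $\GG$ and the cost estimate \eqref{plan2} for \emph{all} admissible pairs $\xi,\eta$. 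Once these two estimates are secured, the Girsanov bookkeeping and the assembly in Step 2 are routine.
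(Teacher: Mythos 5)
Your overall strategy coincides with the paper's: couple $X^\xi$ with a controlled process $Y$ whose extra drift $\theta\LL^{X,Y}$ forces exponential decay of $\|X_t-Y_t\|_r$, then remove the control by Girsanov and pay a price proportional to $\|\xi-\eta\|_r$, which is absorbed into $\rr_{r,\dd}(\xi,\eta)=\dd^{-1}\|\xi-\eta\|_r$ by taking $\dd$ small. Your Step 1 is essentially the estimate the paper imports from \cite{BWY} as \eqref{ap2} and \eqref{SPP}, so that part is fine.

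The genuine gap is in Step 2, at the sentence ``the cost $\E\int_0^\8|v|^2\,\d s\le C\|\xi-\eta\|_r^2$ is small, so Novikov's condition holds.'' Novikov requires $\E\exp(\ff12\int_0^t|v(s)|^2\d s)<\8$; a bound on the \emph{first moment} of $\int_0^t|v|^2\d s$ gives no control on its exponential moment, so you have not shown that $R_t$ is a probability density. Without that, $\Q$ need not be a probability measure, $\mathrm{Law}_\Q(Y_t)=P_t(\eta,\cdot)$ is unjustified, and the Pinsker step collapses. This is exactly the point where the paper does extra work: it introduces the stopping time $\tau_\vv=\inf\{s\ge0:\int_0^s|h(u)|^2\d u\ge\vv^{-1}\|\xi-\eta\|_r^2\}$, so that the Girsanov exponent is bounded deterministically, $R_{t\land\tau_\vv}$ is a genuine density, and $\E R_{t\land\tau_\vv}^2$ can be estimated explicitly (yielding the term $I_3\le \sqrt3\,\vv^{-1/2}\dd\,\e^{3\vv^{-1}\dd^2/2}\rr_{r,\dd}(\xi,\eta)$). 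The cost of the truncation is the event $\{\tau_\vv<t\}$, on which the coupling is abandoned: the paper bounds $\P(\tau_\vv<t)\le c\vv$ by Chebyshev using precisely your $L^1$ cost estimate, and then controls the resulting term $I_2$ via the strong Markov property and Lemma \ref{le2}. Your argument is missing this entire branch, and there is no easy way around it under the stated hypotheses. A secondary loose end: even granting the change of measure, the relative entropy you feed into Pinsker is $\ff12\E_\Q\int_0^t|v|^2\d s$, whereas your decay estimate is an expectation under $\P$; under $\Q$ it is $X$, not $Y$, that satisfies a perturbed equation, so the bound does not transfer automatically. One can instead use $H(\P|\Q)=\ff12\E_\P\int_0^t|v|^2\d s$, but that again presupposes that $\Q$ is a probability measure, returning you to the first problem.
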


\begin{proof}
Our proof is based on the Girsanov transform and  Warsserstein
coupling, which is more straightforward than the ``binding
construction'' argument adopted in \cite[p.254-257]{HMS11}.  For
$\xi,\eta\in \C_r$, let $(X_s^\xi)_{s\ge0}$ be the functional
solution to \eqref{eq1}, and let $(Y^\eta(s))_{s\ge0}$ solve  the
following SDE
\begin{equation}\label{ap0}
\d
\{Y^\eta(s)-G(Y^\eta_s)\}=\big\{b(Y_s^\eta)+\ll\LL^{\xi,\eta}(s)\big\}\d
s+\si(Y_s^\eta)\d W(s),~~~~s\ge0,~~~~Y_0^\eta=\eta,
\end{equation} where $\ll>0$ is a constant,  $\LL^{\xi,\eta}(s):=\LL^{X^\xi,X^\eta}(s)$ is defined   in
\eqref{eq4}.
For $\ll>0$ sufficiently large, the additional drift
$\ll\,\LL^{\xi,\eta}(s)$ strongly   pushes  $Y_s^\eta$ moving toward
to $X_s^\xi$ whenever $s\uparrow\8$. Indeed,    when $\ll>0$ is
sufficiently large, for any $r_0\in(0,r)$, there exists a constant
$c>0$ such that
\begin{equation}\label{ap2}
\E\|X^\xi_s-Y^\eta_s\|_r^2\le c\,\e^{-r_0s}\|\xi-\eta\|_r^2,\ \ s\ge
0,\, \xi,\eta\in \C_r,
\end{equation} and, for any stopping time $\tau$,
\beq\label{SPP} \E\|X^\xi_{s\land\tau}-Y^\eta_{s\land\tau}\|_r^2\le
c\,\|\xi-\eta\|_r^2,\ \ s\ge 0, \,\xi,\eta\in \C_r\end{equation}
  as shown in the proof of
  \cite[(3.11)]{BWY},

 To compare $Y_s^\eta$ with $X_s^\eta$ via the Girsanov theorem, let
  $h(s)=\ll\,\si^{-1}(Y_s^\eta)\LL^{\xi,\eta}(s)$  and set
  \begin{equation*}
R_t:=\exp\Big(-\int_0^t\<h(s),\d
W(s)\>-\ff{1}{2}\int_0^t|h(s)|^2\d s\Big).
\end{equation*} Generally, $R_t$ may not  be  a well defined probability density, so  we shall restrict it by the following stopping
time
\begin{equation*}
\tau_\vv=\inf\bigg\{s\ge0: \int_0^s|h(s)|^2\d
s\ge\vv^{-1}\|\xi-\eta\|_r^2\bigg\}
\end{equation*}
for some constant $\vv\in(0,1)$ sufficiently small to be determined
later. By Girsanov theorem, $\d\Q_{\vv}:=  R_{t\land\tau_\vv}\d\P$
is a probability measure on $(\OO,\F)$  under which
\begin{equation*}
\tt W(s) := W(s) + \int_0^{s\land \tau_\vv}  h(u) \d u,\ \ s\ge 0
\end{equation*} is a $d$-dimensional Brownian motion. Let
$\tt Y^\eta(s)$ solve  the SDE
\begin{equation*}
\d \{\tt Y^\eta(s)-G(\tt Y^\eta_s)\}=\big\{b(\tt Y_s^\eta)+{\bf 1}_{\{\tau_\vv\ge s\}}\ll\,\tt\LL^{\xi,\eta}(s)\big\}\d s+\si(\tt Y_s^\eta)\d
\,W(s),~~~~s\ge 0,~~~~\tt Y_0^\eta=\eta,
\end{equation*} where $\tt \LL^{\xi,\eta}(s):= \Lambda^{X^\xi,\tt Y^\eta}(s).
$ By   the weak uniqueness of solutions to \eqref{ap0} up to time
$t\land\tau_\vv$, we have \beq\label{XTT1} \P(X_{t}^\eta\in\cdot)=
\Q_{\vv}(\tt Y_{t}^\eta\in\cdot),\ \ \tt Y_{t\land\tau_\vv}^\eta=
Y^\eta_{t\land\tau_\vv},\ \ t\ge 0.\end{equation}

To estimate $\mathbb {W}_{\rr_{r,\dd}}( {P}_t(\xi,\cdot),
{P}_t(\eta,\cdot))$, we take the following Wasserstein coupling of
$\P$ and $\Q_\vv$:
\beq\label{SPP2} \beg{split} \Pi(\d\oo,\d\tt\oo)= & (1\land R_{t\land\tau_\vv})(\oo) \P(\d\oo)\dd_{\oo}(\d\tt\oo)\\
&+ \ff{(1-R_{t\land\tau_\vv})^+(\oo) (R_{t\land\tau_\vv}-1)^+(\tt\oo)}{\E[(1-R_{t\land \tau_\vv})^+]}\, \P(\d\oo) \P(\d\tt\oo),
\end{split}\end{equation} where $\dd_\oo$ is the Dirac measure at point $\oo$, and the last term vanishes if $\E[(1-R_{t\land \tau_\vv})^+]=0$ which is only possible when $\xi=\eta$.
Combining  this coupling  with \eqref{XTT1}, and noting that
$\rr_{r,\dd}\le 1$, we obtain that \beq\label{SPP3}
\beg{split}&\mathbb {W}_{\rr_{r,\dd}}( {P}_t(\xi,\cdot),
{P}_t(\eta,\cdot))\\
&\le \int_{\OO\times\OO} \rr_{r,\dd}(X_t^\xi(\oo), \tt Y_t^\eta(\tt\oo)) \Pi(\d\oo,\d\tt\oo)\\
&\le \E\big[\rr_{r,\dd}(X_t^\xi, \tt Y_t^\eta) (1\land R_{t\land\tau_\vv})\big]  +\E \big[(R_{t\land \tau_\vv}-1)^+\big]\\
&\le \E\big[{\bf 1}_{\{t\le\tau_\vv\}}
\rr_{r,\dd}(X_t^\xi,Y_t^\eta)\big] + \E\big[{\bf 1}_{\{t>\tau_\vv\}}
\rr_{r,\dd}(X_t^\xi,\tt Y_t^\eta)\big] + \E \big[(R_{t\land
\tau_\vv}-1)^+\big]\\
&=:I_1(t)+I_2(t)+I_3(t).\end{split}\end{equation} Next we are going
to estimate   three terms above, one-by-one.

Firstly, by \eqref{ap2},  there exists a constant $c>0$ such that
\beq\label{YTT1} I_1(t)\le c\,  \e^{-r_0t/2} \dd^{-1}\|\xi-\eta\|_r=
c\, \e^{-r_0t/2} \rr_{r,\dd}(\xi,\eta) \end{equation} for arbitrary
$\xi,\eta\in\C_r$ with $\rr_{r,\dd}(\xi,\eta)<1.$ Next,    by
H\"older's inequality, the strong Markov property, \eqref{s2},
 \eqref{ap2}, \eqref{SPP},  $\tt Y^\eta_{t\land
\tau_\vv}= Y^\eta_{t\land\tau_\vv}$ due to \eqref{XTT1}, and noting
that the SDE for $\tt Y_{s}^\eta$ coincides with \eqref{eq1} when
$s\ge \tau_\vv$, we obtain that
\begin{equation}\label{ap4}
\begin{split}I_2(t) &\le\dd^{-1}\E\big[\|X_t^\xi-\tt Y_t^\eta\|_r{\bf
1}_{\{\tau_\vv<t\}}\big]\\
&=\dd^{-1} \E\Big[{\bf
1}_{\{\tau_\vv<t\}}\big\{\E\|X_{t-\tau_\vv}^{\xi'}-\tt
Y_{t-\tau_\vv}^{\eta'}\|_r\big\}
\big|_{(\xi',\eta')=(X_{t\land\tau_\vv}^\xi, Y_{t\land\tau_\vv}^\eta)}\Big]\\
&\le \dd^{-1} \sqrt{\P(\tau_\vv<t) K \e^{Kt} \E \|X_{t\land\tau_\vv}^\xi-Y_{t\land\tau_\vv}^\eta\|_r^2}\\
& \le \sqrt{cK\e^{Kt}\P(\tau_\vv<t)}\,\rr_{r,\dd}(\xi,\eta)
\end{split}
\end{equation} for any $\xi,\eta\in\C_r$ with
$\rr_{r,\dd}(\xi,\eta)<1$. On the other hand,    Chebyshev's
inequality, ({\bf H1}), ({\bf A1}), ({\bf A2}) and
 \eqref{ap2} imply
\begin{equation}\label{ap5}
\begin{split}
\P(\tau_\vv<t) &\le \P\bigg(\int_0^t|h(s)|^2\d s
 \ge\vv^{-1}\|\xi-\eta\|_r^2\bigg)\\
&\le c_1\vv\,\|\xi-\eta\|_r^{-2} \int_0^t\E\|X^\xi_s-Y^\eta_s\|_r^2\d s\\
&
 \le c_2\,\vv  \int_0^t\e^{-r_0s}\d s \le\ff{c_2\,\vv}{r_0}
\end{split}
\end{equation}
for some constants $c_1,c_2>0.$ Combining \eqref{ap5} with
\eqref{ap4}, we may find out a constant $c_3>0$ such that
\beq\label{SPP4} I_2(t)\le c_3\sqrt\vv \e^{c_3t}
\rr_{r,\dd}(\xi,\eta),~~~\xi,\eta\in\C_r,~~\rr_{r,\dd}(\xi,\eta)<1.
\end{equation}  By H\"older's inequality and the
definition of the stopping time $\tau_\vv$, we obtain
\begin{equation*}
\begin{split}
I_3^2(t)&\le\E R_{t\land \tau_\vv}^2-1\\
&\le\E\exp\Big(-2\int_0^{t\land \tau_\vv}\<h(s),\d W(s)\>-
\int_0^{t\land \tau_\vv}|h(s)|^2\d s\Big)-1\\
&\le\bigg(\E\exp\Big(6 \int_0^{t\land \tau_\vv}|h(s)|^2\d
s\Big)\bigg)^{1/2}-1\\
&\le
\e^{3\vv^{-1}\|\xi-\eta\|_r^2}-1\le3\vv^{-1}\|\xi-\eta\|_r^2\e^{3\vv^{-1}\|\xi-\eta\|_r^2},
\end{split}
\end{equation*}
where   the last step is due to  the inequality: $\e^x-1\le
x\,\e^x$, $x\ge0.$ Hence, for any $\xi,\eta\in\C_r$ with
$\rr_{r,\dd}(\xi,\eta)<1$, we infer that
\begin{equation*}
\begin{split}
I_3(t)\le\sqrt3\vv^{-1/2}\e^{\ff{3}{2}\vv^{-1}\dd^2}\|\xi-\eta\|_r=\sqrt3\vv^{-1/2}\dd\e^{\ff{3}{2}\vv^{-1}\dd^2}\rr_{r,\dd}(\xi,\eta).
\end{split}
\end{equation*}
 Combining this with \eqref{SPP3}, \eqref{YTT1} and \eqref{SPP4}, we arrive at
 $$\mathbb {W}_{\rr_{r,\dd}}( {P}_t(\xi,\cdot),  {P}_t(\eta,\cdot))\le
 c_3\Big\{
 \e^{-r_0t/2}
 + \e^{Kt/2} \vv +\vv^{-1/2}\dd\e^{\ff{3}{2}\vv^{-1}\dd^2}\Big\}\rr_{r,\dd}(\xi,\eta)$$
for any $\xi,\eta\in\C_r$ with $\rr_{r,\dd}(\xi,\eta)<1$. Thus,
\eqref{ap7} holds by taking $t>0$ sufficiently large and
$\vv=\dd\in(0,1)$ sufficiently small.
  \end{proof}

\begin{proof}[Proof of Theorem $\ref{er}$] Since $\lim_{\|\xi\|_r\to\infty} V(\xi)=\infty$, there is a constant $R>0$ such that
$\{V\le 4K\}\subset B_R$.   By Lemmas  \ref{le2} and \ref{le3},
there exists  $t_0\ge t_{R,\dd/4}$ such that $\{V\le 4K\}$ is
$\rr_{r,\dd}$-small and $\rr_{r,\dd}$ is contractive for $ P_{t}$
for any  $t\ge t_0$ and $\dd>0$. So, in terms of Theorem
\ref{Harris}, $ P_t$ has a unique probability measure $\pi $, and
there exists a constant $t_1>0$ such that
 \beq\label{QPP}     \mathbb {W}_{\rr_{r,\dd,V}}(\mu  {P}_{t_1},\nu  {P}_{t_1})\le
\ff 1 2 \mathbb {W}_{\rr_{r,\dd,V}}(\mu,\nu),~~~~\mu,\nu\in\scr
{P}(\C_r).\end{equation} Combining this with  the semigroup
property, to prove \eqref{QPP2} it suffices to find out a constant
$C>0$ such that \beq\label{SSPP} \mathbb {W}_{\rr_{r,\dd,V}}(\dd_\xi
{P}_{t}, \dd_\eta  {P}_{t})\le C  \rr_{r,\dd,V}(\xi,\eta),\ \ t\in
[0,t_1], \xi,\eta\in \C_r.\end{equation} By   \eqref{YPP} and
\eqref{s2}, there exists from H\"older's inequality a constant $C>0$
such that
\beg{align*} \mathbb {W}_{\rr_{r,\dd,V}}(\dd_\xi  {P}_{t}, \dd_\eta  {P}_{t})&\le\E\sqrt{\rr_{r,\dd}(X_t^\xi, X_t^\eta) (1+V(X_t^\xi)+V(X_t^\eta))}\\
&\le \sqrt{ \E\rr_{r,\dd} (X_t^\xi, X_t^\eta)  \E (1+V(X_t^\xi)+V(X^\eta_t))}\\
&\le C \sqrt{\rr_r(\xi,\eta) (1+ V(\xi)+V(\eta))}\\
&=C  \rr_{r,\dd,V}(\xi,\eta)  \end{align*}
 for any $t\in [0,t_1], \xi,\eta\in
\C_r.$ Therefore, \eqref{SSPP} holds true so that  \eqref{QPP2} is
available by in addition taking the equivalence of $\rho_r$ and
$\rho_{r,\dd}$.

Next, by \eqref{YPP}, we have $\pi(V):=\int_{\C_r}V\d\pi<\infty$, so
that \eqref{QPP2}   implies
$$\mathbb {W}_{\rr_{r,V}}( {P}_t(\xi,\cdot), \pi)=  \mathbb
{W}_{\rr_{r,V}}(\dd_\xi {P}_t, \pi {P}_t)\le c\,\e^{-\ll t}
\int_{\C_r} \rr_{r,V} (\xi,\eta)\pi(\d\eta) \le C\, \e^{-\ll t}
\sqrt{1+V(\xi)}$$ for some constant $C>0$.
\end{proof}

\section{Proofs of Proposition \ref{Ly} and Example \ref{ex1}}
%
%

\begin{proof}[  Proof of Proposition \ref{Ly} ]
For  simplicity, we write  $X(t)=X^\xi(t)$ and $X_t=X^\xi_t$. By
  \eqref{RPPb}, it is sufficient to find out a constant
$c>0$ such that
\begin{equation}\label{W}
\E\Big(\sup_{0\le s\le t}(\e^{2r s}|\LL^X(s)|^2)\Big)\le c\,\Big((1+
t)\|\xi\|_r^2+\e^{2rt}\Big).
\end{equation}
By Fubini's theorem and integration by substitution, we deduce from
\eqref{eq10} that
\begin{equation}\label{r3}
\begin{split}
&\int_0^t\int_{-\8}^0 \e^{2rs}|X(s+\theta)|^2\mu_0(\d\theta)\d
s\\
&=\int_0^t\int_{-\8}^{-s}
\e^{-2r\theta}\e^{2r(s+\theta)}|X(s+\theta)|^2\mu_0(\d\theta)\d
s+\int_{-t}^0\e^{-2\theta}\int_0^{t+\theta} \e^{2rs}|X(s)|^2\d s\mu_0(\d\theta)\\
&\le\dd_r(\mu_0)\|\xi\|_r^2t+\dd_r(\mu_0)\int_0^t \e^{2rs}|X(s)|^2\d
s,
\end{split}
\end{equation}
which, together with \eqref{eq11}, leads to: for any $\vv>0$, there
exists a constant $c_\vv>0$ such that
\begin{equation*}
\begin{split}
\int_0^t\e^{2rs} |\Lambda^X(s)|^2\d s&\le\Big( 1+\vv
+(1+1/\vv)\aa_1\Big)\int_0^t\e^{2rs} |X(s)|^2\d
s\\
&\quad+(1+1/\vv)\aa_2\int_0^t\int_{-\8}^0
\e^{2rs}|X(s+\theta)|^2\mu_0(\d\theta)\d s\\
&\le c_\vv\|\xi\|_r^2t+\Big(1+\aa_1+\aa_2 \dd_r(\mu_0)+\vv +
(\aa_1+\aa_2 \dd_r(\mu_0))/\vv \Big)\int_0^t \e^{2rs}|X(s)|^2\d s.
\end{split}
\end{equation*}
Taking $\vv=(\aa_1+\aa_2 \dd_r(\mu_0))^{1/2}$, we find out a
constant $c_1>0 $ such that
\begin{equation}\label{r5}
\int_0^t\e^{2rs} |\Lambda^X(s)|^2\d s \le
c_1\|\xi\|_r^2t+\bb_1\int_0^t \e^{2rs}|X(s)|^2\d s
\end{equation}
where $\bb_1>0$ is in \eqref{eq12}. Now, by It\^o's formula, it
follows from \eqref{r1} that
\begin{equation}\label{r2}
\begin{split}
\e^{2rt}| \Lambda^X(t)|^2 &\le|\Lambda^\xi(0)|^2+\int_0^t
\e^{2rs}\Big\{c_0+2r| \Lambda^X(s)|^2
-\ll_1|X(s)|^2\\
&\quad+\ll_2\int_{-\8}^0|X(s+\theta)|^2\mu_0(\d\theta)\Big\}\d
s+2\int_0^t\e^{2rs}\< \Lambda^X(s),\si(X_s)\d W(s)\>.
\end{split}
\end{equation}
Plugging \eqref{r3} and \eqref{r5} into \eqref{r2} and utilizing
({\bf H1}) gives
\begin{equation*}
\begin{split}
\e^{2rt}\E| \Lambda^X(t)|^2 &\le
c_2\Big((1+t)\|\xi\|_r^2+\e^{2rt}\Big)-\Big(\ll_1-2r\beta_1-\ll_2\dd_r(\mu_0)\Big)\int_0^t
\e^{2rs}\E|X(s)|^2\d s
\end{split}
\end{equation*}
for some constant $c_2>0$. Since
$\ll_1-2r\beta_1-\ll_2\dd_r(\mu_0)>0$,  this implies
\begin{equation}\label{d2}
 \int_0^t \e^{2rs}\E|X(s)|^2\d s\le c_3(1+t)\|\xi\|_r^2+c_3\e^{2rt}
\end{equation}
for some constant $c_3>0$. On the other hand, by BDG's inequality,
we deduce from \eqref{r11} and \eqref{r3} that
\begin{equation}\label{d1}
\begin{split}
&2\sup_{0\le s\le t}\Big|\int_0^s\e^{2ru}\< \Lambda^X(u),\si(X_u)\d
W(u)\>\Big|\\
&\le8\sqrt2\,\E\Big(\sup_{0\le s\le
t}(\e^{2rs}|\Lambda^X(s)|^2)\int_0^t\e^{2rs} \|\si(X_s)\|_{\rm
HS}^2\d s\Big)^{1/2}\\
&\le\ff{1}{2}\E\Big(\sup_{0\le s\le t}(\e^{2r
s}|\LL^X(s)|^2)\Big)+c_4\Big(\e^{2rt}+t\|\xi\|_r^2+\int_0^t
\e^{2rs}\E|X(s)|^2\d s\Big)
\end{split}
\end{equation}
for some $c_4>0$. Thus, by taking \eqref{r5} and \eqref{r2} into
account and making use of \eqref{d1} and ({\bf H1}), there exists a
constant $c_5>0$ such that
\begin{equation*}
\begin{split}
\E\Big(\sup_{0\le s\le t}(\e^{2r s}|\LL^X(s)|^2)\Big)\le
c_5\Big((1+t) \|\xi\|_r^2+\e^{2rt}+\int_0^t \e^{2rs}\E|X(s)|^2\d
s\Big).
\end{split}
\end{equation*}
Henceforth, \eqref{W} follows directly from \eqref{d2}.
\end{proof}

\begin{proof}[Proof of Example \ref{ex1}] It suffices  to verify ({\bf
H1}), ({\bf A1})-({\bf A2}) and conditions in Proposition \ref{pro}.

  By H\"older's inequality, one
has
\begin{equation}\label{ex2}
|G(\xi)-G(\eta)|^2\le\gg_1^2\int_{-\8}^0|\xi(\theta)-\eta(\theta)|^2\mu_0(\d\theta)\le
\gg_1^2\dd_r(\mu_0)\|\xi-\eta\|_r^2,~~~\xi,\eta\in\C_r.
\end{equation}
Therefore, ({\bf H1}) holds for $\aa=\gg_1\sqrt{\dd_r(\mu_0)}<1$ owing
to \eqref{d4}. By H\"older's inequality and \eqref{ex2}, we can find
some constants $c_1,c_2>0$ such that
\begin{equation*}
\begin{split}
&\<\xi(0)-\eta(0)-(G(\xi)-G(\eta)),b(\xi)-b(\eta)\>^++|\si(\xi)-\si(\eta)|^2\\
&\le\Big((\xi(0)-\eta(0)-(G(\xi)-G(\eta)))\Big\{-\gg_3(\xi(0)-\eta(0))+\gg_5\int_{-\8}^0(\xi(\theta)-\eta(\theta))\mu_0(\d\theta)\Big\}\Big)^+\\
&\quad+\gg_2^2\Big(\int_{-\8}^0|\xi(\theta)-\eta(\theta)| \mu_0(\d\theta)\Big)^2\\
&\le c_1\Big\{|\xi(0)-\eta(0)|^2+|G(\xi)-G(\eta)|^2 \Big\}\\
&\le c_2\|\xi-\eta\|_r^2,~~~~\xi,\eta\in\C_r,
\end{split}
\end{equation*}
where the first inequality is due to
\begin{equation*}
-\gg_4\<\xi(0)-G(\xi)-(\eta(0)-G(\eta)),(\xi(0)-G(\xi))^{1/3}-(\eta(0)-G(\eta))^{1/3}\>\le0.
\end{equation*}
Consequently, ({\bf A1}) holds true. According to the formula of
$\si(\xi)$, ({\bf A1}) holds trivially.

Finally, we verify conditions in Proposition \ref{pro}. Obviously,
\eqref{eq10} holds with $\dd_r(\mu_0)=\ff{1}{r_0(r_0-2r)}<\8$ due to
$r_0>2r.$ Thanks to $G(\xi_0)={\bf0}$ and \eqref{ex2}, \eqref{eq11}
holds for $\aa_1=0$ and $\aa_2=\gg_1^2$. By H\"older's inequality,
it follows that
\begin{equation}\label{s1}
|\si(\xi)|^2\le1+\ff{1}{\aa}
+(1+\aa)\gg_2^2\int_{-\8}^0|\xi(\theta)|^2\mu_0(\d\theta),~~~\aa>0,\,
\xi\in\C_r.
\end{equation}
Next, by using \eqref{s1},   for any $\vv>0 $ and $\xi\in\C_r$,
\begin{equation*}
\begin{split}
&2(\xi(0)-G(\xi))b(\xi)+|\si(\xi)|^2\\
&\le1+\ff{1}{\aa}-2\gg_3\xi^2(0)+2(\gg_5+\gg_1\gg_3)\xi(0)\int_{-\8}^0\xi(\theta)\mu_0(\d\theta)+
(1+\aa)\gg_2^2\int_{-\8}^0|\xi(\theta)|^2\mu_0(\d\theta)\\
&\le1+\ff{1}{\aa}-\Big(2\gg_3-(\gg_5+\gg_1\gg_3)\vv\Big)\xi^2(0)
+\Big(\ff{1}{\vv}(\gg_5+\gg_1\gg_3)
+(1+\aa)\gg_2^2\Big)\int_{-\8}^0|\xi(\theta)|^2\mu_0(\d\theta).
\end{split}
\end{equation*}
Taking $\vv=\sqrt{\dd_r(\mu_0)}$ and  $\aa\in(0,1)$ sufficiently small
and combining \eqref{d4}, we conclude that \eqref{r1} holds.

\end{proof}


\begin{thebibliography}{17}
{\small

\setlength{\baselineskip}{0.14in}
\parskip=0pt

\bibitem{BGM}
 Banerjee, S., Gordina, M.,   Mariano, P., Coupling in the Heisenberg group and its applications to gradient
 estimates,  arXiv:1610.06430v3.
\bibitem{BWY11}      Bao, J.,  Wang, F.-Y., Yuan,  C.,  Derivative formula and Harnack inequality for degenerate functional SDEs,    {\it
Stoch. Dyn.},
  {\bf 13} (2013),   1250013, 22pp.

 \bibitem{BWY11b}   Bao, J.,  Wang, F.-Y., Yuan,  C., \emph{Bismut Formulae and Applications for Functional SPDEs,} {\it  Bull. Math. Sci.},
   {\bf 137} (2013), 509--522.


\bibitem{BWY}Bao, J., Wang, F.-Y., Yuan, C., Asymptotic Log-Harnack Inequality and Applications for Stochastic
Systems of Infinite Memory, arXiv: 1710.01042v1.

\bibitem{BYY} Bao, J., Yin, G., Yuan, C., Stationary
distributions for retarded stochastic differential equations without
dissipativity,
 {\it Stochastics}, {\bf 89} (2017),   530--549.


\bibitem{Bass} Bass, R., Diffusions and elliptic operators,
Springer, New York, 1998.


\bibitem{B14}Butkovsky, O., Subgeometric rates of convergence of Markov
processes in the Wasserstein metric,  {\it Ann. Appl. Probab.},
{\bf24} (2014), 526--552.


\bibitem{BS}  Butkovsky, O., Scheutzow, M., Invariant measures for stochastic functional
differential equations, {\it Electron. J. Probab.}, {\bf22} (2017),
  1--23

\bibitem{CL} Chen, M.-F., Li, S.,   Coupling methods for multidimensional
diffusion processes,  {\it Ann. Probab.}, {\bf17} (1989),  151--177.

\bibitem{CH} Cloez, B. and Hairer, M.: Exponential ergodicity for Markov processes with random switching, {\it Bernoulli}, {\bf21} (2015),   505--536.

\bibitem{DZ} Da Prato, G.,  Zabczyk, J.,  Ergodicity for Infinite-Dimensional
Systems, in: London Mathematical Society, Lecture Note Series, vol.
229, Cambridge University Press, Cambridge, 1996.

\bibitem{DMS13} Dolbeault, J.,   Mouhot, C.,  Schmeiser, C.,
     Hypocoercivity for linear kinetic equations conserving mass,
         {\it Trans. Amer. Math. Soc.}, {\bf  367} (2015),  3807--3828.


\bibitem{GS} Grothaus, M.,   Stilgenbauer, P.,  Hypocoercivity for kolmogorov backward evolution equations and applications,  {\it J. Funct. Anal.},
 {\bf 267} (2014), 3515--3556.
 
\bibitem{GW} Grothaus, M.,  Wang,  F. -Y., Weak Poincar\'e inequalities for convergence rate of degenerate diffusion processes,   arXiv:1703.04821.
\bibitem{H16}  Hairer, M.,Convergence of Markov Processes,
http://www.hairer.org/notes/Convergence.pdf.

\bibitem{H08}  Hairer, M.,  Ergodicity for stochastic PDEs, 2008.
www.hairer.org/notes/Imperial.pdf


\bibitem{HM} Hairer, M., Mattingly,
 Jonathan C., Ergodicity of the 2D Navier-Stokes equations with degenerate stochastic forcing, {\it Ann. of Math.},   {\bf164} (2006),   993--1032.

\bibitem{HMS11}Hairer, M., Mattingly,  J.~C.,    Scheutzow, M.,   Asymptotic
coupling and a general form of Harris theorem with applications to
stochastic delay equations,  {\it Probab. Theory Related Fields},
{\bf 149} (2011),  223--259.



\bibitem{K99} Krylov, N.~V., On Kolmogorov's equations for finite-dimensional
diffusions, Stochastic PDE's and Kolmogorov equations in infinite
dimensions (Cetraro, 1998), Lecture Notes in Math., vol. 1715,
Springer, Berlin, 1999, pp. 1--63.



\bibitem{MT} Majda, A.~J., Tong, X.~T., Geometric ergodicity for piecewise contracting processes with
 applications for tropical stochastic lattice models, {\it Commu. Pure Appl. Math.}, {\bf LXIX} (2016), 1110--1153.





\bibitem{M08} Mao, X., \emph{Stochastic Differential Equations and
Applications}, Second Ed., Horwood Publishing Limited, Chichester,
2008.

\bibitem{MSH}  Mattingly, J.~C.,  Stuart,  A.~M.,     Higham, D.~J.,  Ergodicity for
SDEs and approximations: locally Lipschitz vector fields and
degenerate noise, {\it Stochastic Process. Appl.}, {\bf101} (2002),
185--232.


\bibitem{MT93} Meyn, S.~P., Tweedie, R.~L., \emph{Markov Chains and Stochastic Stability},
Springer-Verlag, Berlin, 1993.

\bibitem{M84}  Mohammed, S.-E. A., \emph{Stochastic Functional Differential
Equations}, Pitman, Boston, 1984.

 \bibitem{PR}Pr\'{e}v\^ot, C., R\"ockner, M.,
  A concise course on stochastic partial differential equations. Lecture Notes in Mathematics, 1905. Springer, Berlin, 2007.





 \bibitem{TM}Tong, X.~T., Majda, A.~ J.,
 Moment bounds and geometric ergodicity of diffusions with random switching and unbounded transition rates,
  {\it Res. Math. Sci.}, {\bf3} (2016), Paper No. 41, 33 pp.

 \bibitem{Vil09}  Villani, C.,
 Hypocoercivity, {\it Mem. Amer. Math. Soc.},
   {\bf 202 }(2009), iv+141.

\bibitem{VS}
von Renesse, M.-K., Scheutzow, M., Existence and uniqueness of
solutions of stochastic functional differential equations, {\it
Random Oper. Stoch. Equ.}, {\bf18} (2010),  267--284.

\bibitem{Wang04}  Wang, F.-Y., \emph{Functional Inequalities, Markov Semigroups and
Spectral Theory}, Science Press, 2005, Beijing.

\bibitem{Wbook} Wang, F.-Y., \emph{Harnack inequalities and
Applications for Stochastic Partial Differential Equations},
Springer, 2013, Berlin.

\bibitem{W17} Wang, F.-Y.,   Hypercontractivity and applications for stochastic Hamiltonian systems,  {\it J. Funct. Anal.}, {\bf 272} (2017),
 5360--5383.
 
\bibitem{WY10}  Wang, F.-Y.,  Yuan, C.,  Harnack inequalities for functional SDEs with multiplicative noise and applications,  {\it
 Stochastic Process. Appl.}, {\bf 121} (2011), 2692--2710.
 
\bibitem{WYM}Wu, F., Yin, G., Mei, H., Stochastic functional
differential equations with infinite delay: existence and uniqueness
of solutions, solution maps, Markov properties, and ergodicity, {\it
J. Differential Equations}, {\bf262} (2017),   1226--1252.



}
\end{thebibliography}
\end{document}